\theoremstyle{plain}
\newcommand{\C}{\mathbb C}
\newcommand{\lb}{\lambda}
\newcommand{\li}{\mathcal L}
\newcommand{\cp}{characteristic polynomial}
\newcommand*{\Keywords}[1]{\gdef\@Keywords{#1}}
\newtheorem{thm}{Theorem}[section]
\newtheorem{exmp}[thm]{Example}
\newtheorem{corollary}[thm]{Corollary}
\newtheorem{rmk}[thm]{Remark}	
\newtheorem{proposition}[thm]{Proposition}
\newtheorem{lemma}[thm]{Lemma}
\newtheorem{definition}[thm]{Definition}
\newtheorem{question}[thm]{Question}
\DeclareMathOperator{\ad}{ad}
\DeclareMathOperator{\aut}{Aut}
\DeclareMathOperator{\rank}{rank}
\DeclareMathSymbol{\mlq}{\mathord}{operators}{``}
\DeclareMathSymbol{\mrq}{\mathord}{operators}{`'}
\makeatletter\@addtoreset{equation}{section} \makeatother
\begin{document}

\title{Spectral invariants for finite dimensional Lie algebras}
\author[F. Azari Key and R. Yang]{Fatemeh Azari Key and Rongwei Yang}
\address{State University of New York At Albany \\ Department of Mathematics \\
Albany NY 12222\\
Dalian University of Technology \\ School of Mathematical Sciences \\ Dalian 116023}
\email{fa_azarikei@yahoo.com, fa.azari@mail.dlut.edu.cn, ryang@albany.edu}
\subjclass[2010]{Primary 17B10; Secondary 17B30, 15A18} 
\keywords{characteristic polynomial, Lie algebra, solvable and nilpotent Lie algebras, automorphism group, spectral matrix, eigen-variety, Betti number, Poincar\'{e} polynomial}

\begin{abstract}
For a Lie algebra ${\mathcal L}$ with basis $\{x_1,x_2,\cdots,x_n\}$, its associated {\cp} 
$Q_{{\mathcal L}}(z)$ is the determinant of the linear pencil $z_0I+z_1\ad x_1+\cdots +z_n\ad x_n.$
This paper shows that $Q_{\mathcal L}$ is invariant under the automorphism group $\aut(\li).$ The zero variety and factorization of $Q_{\mathcal L}$ reflect the structure of $\li$. In the case $\li$ is solvable $Q_\li$ is known to be a product of linear factors. This fact gives rise to the definition of spectral matrix and the Poincar\'{e} polynomial for $\li$. Application is given to $1$-dimensional extensions of nilpotent Lie algebras.

\end{abstract}

\maketitle

\section{Introduction}

Given a square matrix $A\in M_k(\C)$, its characteristic polynomial $Q_A(z)=\det (A-zI)$ is a degree $k$ polynomial in one complex variable $z$ and it plays an important role in the study of $A$. However, in many situtations in mathematics and science we are concerned with several matrices $A_1,A_2,\cdots,A_n$ of the same size. Whether there exists a proper notion of characteristic polynomial for these several matrices is a difficult problem. One related study was dated back to the late 1890s when Dedekind and Frobenius considered the determinant of the linear pencil $A(z)=z_0I+z_1A_1+\cdots +z_nA_n$. To be precise, given a finite group $G=\{1, g_1,\cdots,g_n\}$,
its group algebra $\C[G]$ is a $(n+1)$-dimensional complex vector space with basis $G$. Multiplication on the left by an element $g\in G$ on any vector $h\in \C[G]$ results a permutation of coefficients of $h$ and hence it gives rise to a unitary matrix in $M_{n+1}(\C)$. In current day terminology this is the left regular representation of the group $G$ which we shall denote by $\lambda_G$ here. Dedekind studied the determinant 
\[Q_{\lambda_G}(z)=\det (z_0I+z_1\lambda_G(g_1)+\cdots +z_n\lambda_G(g_n))\]
for some groups and found it to have quite interesting factorizations. Likewise, for a general finite dimensional representations $\pi$ one may define the corresponding polynomial $Q_{\pi}(z)$. Later in 1896 Frobenius proved the following theorem (expressed in current terminology).\\

{\bf Theorem}(Dedekind-Frobenius \cite{De, Fr}). {\em If $G=\{1, g_1,\cdots,g_n\}$ is a finite group, then
\[Q_{\lambda_G}(z)=\prod_{\pi\in \hat{G}}Q_{\pi}^{d_\pi}(z),\]
where $\hat{G}$ is the set of equivalence classes of irreducible unitary representations of $G$ (called the unitary dual of $G$), and $d_\pi$ is the dimension of the representation $\pi$. Moreover, each such $Q_\pi$ is an irreducible polynomial.}\\

The polynomial $Q_\pi$ was later called the group determinant of $G$ and its study by Dedekind and Frobenius is indeed the starting point of group representation theory. For more information on this development, we refer readers to \cite{Cu,Cu2, Di21, Di75, FS}. However, the study of the group determinant has not been generalized to other algebraic settings. In 2009, the notion of projective spectrum was introduced in \cite{Ya} for general elements $A_1,A_2,\cdots,A_n$ in a unital Banach algebra based on the invertibility of the linear pencil $A(z)=z_1A_1+\cdots +z_nA_n$. Some follow-up work can be found in \cite{BCY,CY,CSZ,DY,GOY,GY,HWY,MQW17,SYZ}. Recent work related to characteristic polynomial of groups can be found in \cite{CST, HY18}. 

Paper \cite{HZ} by Zhang and Hu initiated the study of characteristic polynomials for Lie algebras. 
\begin{definition}
Consider a finite dimensional Lie algebra ${\mathcal L}$ with basis $S=\{x_1,x_2,\cdots,x_n\}$ and a linear representation $\rho$. The polynomial
\[Q_{{\mathcal L},\rho}(z):=\det (z_0I+z_1\rho(x_1)+\cdots +z_n\rho(x_n))\] is called the characteristic polynomial for ${\mathcal L}$ with respect to $\rho$ and the basis $S$. 
\end{definition}
In the case $\rho$ is the adjoint representation $\ad$, the characteristic polynomial will simply be called the characteristic polynomial of ${\mathcal L}$ and be denoted by $Q_{\mathcal L}$. Paper \cite{HZ} showed that $\li$ is solvable if and only if the characteristic polynomial $Q_{{\mathcal L},\rho}(z)$ is a product of linear factors for any representation $\rho$. This paper aims to make a general study on the characteristic polynomial of finite dimensional Lie algebras. The paper is organized as follows.

\tableofcontents

\noindent {\bf Acknowledgement}. Most part of this paper was completed when the authors were visiting the School of Mathematical Sciences at Tianjin Normal University. The authors thank the School for the hospitality and support. The first author would also like to thank Professor Yufeng Lu for his continuous encouragement and support.

\section{Characteristic polynomial and automorphism group}

Consider a Lie algebra ${\mathcal L}$ with basis $\{x_1,x_2,\cdots,x_n\}$. Then for $1\leq i, j\leq n$ using Einstein summation convention we can write $[x_i, x_j]=T_{i,j}^kx_k$. The set of complex numbers 
$\{T_{i,j}^k: 1\leq i, j, k\leq n\}$ are called the structure constants because they completely determine the structure of the Lie algebra ${\mathcal L}$. In particular, the matrix representation of the adjoint representation $\ad {x_i}$ is given by the matrix $T_i:=(T_{i,j}^k)_{1\leq k,j\leq n}$, whose rows are indexed by $k$ and columns are indexed by $j$. For instance, if ${\mathcal L}=span \Big\{x_1,x_2,x_3\Big \}$ is a $3$-dimensional Lie algebra then since
\begin{align*} 
&[x_1,x_1] = T^1_{11} x_1 + T^2_{11} x_2 + T^3_{11} x_3 \nonumber \\
&[x_1,x_2] = T^1_{12} x_1 + T^2_{12} x_2 + T^3_{12} x_3\nonumber \\
&[x_1,x_3] = T^1_{13} x_1 + T^2_{13} x_2 + T^3_{13} x_3 \nonumber,
\end{align*}
we have  \[T_1=\left(\begin{matrix}
 T^1_{11} &   T^1_{12} &  T^1_{13} \\
 T^2_{11} &   T^2_{12} &  T^2_{13} \\
 T^3_{11} &   T^3_{12} &  T^3_{13} 
\end{matrix}\right)=(T^k_{1j})_{3\times3} , 1\leq k,j \leq3.\]
For convenience, in many places of the paper we shall not distinguish $\ad {x_i}$ from its matrix representation $T_i$. For example, the characteristic polynomial 
$Q_{\mathcal L}(z)=\det (z_0I+z_1 T_1+\cdots +z_n T_n).$ Before we proceed with a general study, let us look at a particular example.

\begin{exmp}\label{ex:su2}
First, the $3$-dimensional simple Lie algebra $\frak{su}(2)$ plays an important role in mathematics and physics. It is spanned by the Pauli matrices
\[\sigma_1=\left(\begin{matrix}
0 & 1\\
1 & 0
\end{matrix}\right),\ \sigma_2=\left(\begin{matrix}
0 & -i\\
i & 0
\end{matrix}\right),\ \sigma_3=\left(\begin{matrix}
1 & 0 \\
0 & -1
\end{matrix}\right)\]
with the relations
\[ [\sigma_1,\sigma_2]=2i\sigma_3,\  [\sigma_2,\sigma_3]=2i\sigma_1,\  [\sigma_3,\sigma_1]=2i\sigma_2.\]
Then the matrix representations for $\ad{\sigma_1}, \ad{\sigma_1}$ and $\ad{\sigma_3}$ are
\[T_1=\left(\begin{matrix}
0 & 0 & 0\\
0 & 0 & -2i\\
0 & 2i & 0
\end{matrix}\right),\ T_2=\left(\begin{matrix}
0 & 0 & 2i\\
0 & 0 & 0\\
-2i & 0 & 0
\end{matrix}\right), \ T_3=\left(\begin{matrix}
0 & -2i & 0\\
2i & 0 & 0\\
0 & 0 & 0
\end{matrix}\right),\]
respectively. One computes that its characteristic polynomial 
\begin{align*}
Q(z)&=\det (z_0I+z_1T_1+z_2T_2+z_3T_3)\\
&=z_0\left(z_0^2-4(z_1^2+z_2^2+z_3^2)\right).
\end{align*}
\end{exmp}

\subsection{Invariance of characteristic polynomial}
We start the general study with a lemma which describes how characteristic polynomial changes with respect to isomorphism. For any $z=(z_0,z_1,\cdots,z_n)$ we can write $z=(z_0, z')$ where $z'$ stands for the row vector $(z_1,z_2,\cdots,z_n)$. Then every matrix $B\in GL_n$ gives rise to a change of variables $(z_0, z')$ by $(z_0, z'B)$. For convenience, we will write $(z_0, z'B)$ simply as $zB$.
Also, we let $GL_n$ denote the general linear group of invertible $n\times n$ complex matrices.

\begin{lemma}\label{lem:iso}
If two $n$-dimensional Lie algebras $\li_1$ and $\li_2$ are isomorphic, then there exists matrix $B\in GL_n$ such that $Q_{\li_1}(z)=Q_{\li_2}(zB).$
\end{lemma}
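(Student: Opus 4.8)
The plan is to start from a Lie algebra isomorphism $\phi:\li_1\to\li_2$ and translate the fact that $\phi$ respects brackets into an intertwining relation between the two adjoint representations. Reading the identity $\phi([x,y])=[\phi(x),\phi(y)]$ as an operator identity in the variable $y$ gives $\phi\circ\ad_{\li_1}(x)=\ad_{\li_2}(\phi(x))\circ\phi$ for every $x\in\li_1$, and hence
\[
\ad_{\li_1}(x)=\phi^{-1}\circ\ad_{\li_2}(\phi(x))\circ\phi.
\]
This is the conceptual heart of the argument: an isomorphism conjugates one adjoint action into the other, up to composing with the linear map $\phi$.

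Next I would pass to matrices in the chosen bases. Fixing bases $\{x_1,\dots,x_n\}$ of $\li_1$ and $\{y_1,\dots,y_n\}$ of $\li_2$, let $P\in GL_n$ be the matrix of $\phi$, so that $\phi(x_i)=\sum_j P_{ji}\,y_j$. Writing $T_i^{(1)}=\ad_{\li_1}(x_i)$ and $T_j^{(2)}=\ad_{\li_2}(y_j)$ for the corresponding structure-constant matrices, linearity of $\ad_{\li_2}$ turns the intertwining relation into
\[
T_i^{(1)}=P^{-1}\Big(\sum_{j}P_{ji}\,T_j^{(2)}\Big)P.
\]

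Then I would substitute this into the characteristic polynomial and exploit that conjugation by $P$ leaves a determinant unchanged:
\begin{align*}
Q_{\li_1}(z)
&=\det\Big(z_0I+\sum_i z_i T_i^{(1)}\Big)
=\det\Big(P^{-1}\big(z_0I+\sum_i z_i\sum_j P_{ji}T_j^{(2)}\big)P\Big)\\
&=\det\Big(z_0I+\sum_j\big(\textstyle\sum_i z_iP_{ji}\big)T_j^{(2)}\Big).
\end{align*}
The coefficient of $T_j^{(2)}$ is $\sum_i z_iP_{ji}$, which is exactly the $j$-th entry of the row vector $z'P^{\mathsf T}$. Setting $B=P^{\mathsf T}\in GL_n$, the right-hand side is precisely $Q_{\li_2}(z_0,z'B)=Q_{\li_2}(zB)$, which is the claim.

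The routine parts are the determinant manipulation and the linearity of $\ad$; the step that requires care is the index bookkeeping needed to confirm that the induced substitution is the right-multiplication $z'\mapsto z'B$ in the paper's convention, and in particular to pin down $B=P^{\mathsf T}$ rather than $P$ or $P^{-1}$. Because $P$ is invertible, $B$ automatically lies in $GL_n$, so no further hypothesis is needed.
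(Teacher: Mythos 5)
Your proof is correct and follows essentially the same route as the paper's: the identity $T_i^{(1)}=P^{-1}\bigl(\sum_j P_{ji}T_j^{(2)}\bigr)P$ you derive from the intertwining relation is exactly the paper's structure-constant identity $\sum_\alpha b_{i\alpha}T_\alpha B^t=B^t\hat T_i$ (with $B=P^{\mathsf T}$), and the ensuing determinant manipulation and change of variables $z'\mapsto z'B$ are identical. Your derivation of that key identity via $\phi\circ\ad(x)=\ad(\phi(x))\circ\phi$ is a cleaner packaging than the paper's explicit expansion of structure constants, but the substance is the same.
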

\begin{proof}
We first describe how the {\cp} varies with respect to a change of basis. Let $\{x_1,x_2, \hdots,x_n\}$ and $\{\hat{x}_1,\hat{x}_2,\hdots, \hat{x}_n \}$ be two bases for the Lie algebra $\li$, and assume
$\hat{x}_{i}=b_{ij}x_{j}$. Then the transformation matrix is $B=(b_{ij})_{n\times n}\in GL_n$. Suppose the structure constants with respect to the two bases are given by the equations
\begin{equation}\label{eq:sc}
[x_{i},x_{j}]=T^{k}_{ij} x_{k}, \ \ \ [\hat{x}_i,\hat{x}_j]= \hat{T}^k_{ij}\hat{x}_k,
\end{equation}
respectively, then one verifies that
\begin{align*}
[\hat{x}_i,\hat{x}_j]&=[\Sigma_\alpha b_{i\alpha}x_{\alpha},\Sigma_\beta b_{j\beta}x_{\beta}] \\
&=\Sigma_{\alpha,\beta} b_{i\alpha} b_{j\beta} [x_{\alpha},x_{\beta}] \nonumber \\
&=\Sigma_{\alpha,\beta} b_{i\alpha} b_{j\beta} \Sigma_k {T}^{k}_{\alpha\beta} x_{k}. 
\end{align*}
Hence by (\ref{eq:sc}) one has
\[\Sigma_{\alpha,\beta} b_{i\alpha} b_{j\beta} \Sigma_k {T}^{k}_{\alpha\beta} x_{k}=\Sigma_k  \hat{T}^k_{ij}  \hat{x}_k =\Sigma_k \Sigma_l  \hat{T}^k_{ij} b_{kl}x_{l}.\]
Equating the coefficients of $x_l$, one has
\[\Sigma_{\alpha,\beta} b_{i\alpha} b_{j\beta} {T}^l_{\alpha,\beta}=\Sigma_{k}\hat{T}^k_{ij} b_{kl}\nonumber,\]
which one writes in matrix form as
\begin{equation}\label{eq:mtrans}
\Sigma_{\alpha} b_{i\alpha}{{T}_{\alpha}}B^t=B^t \hat T_{i},\ \ 1\leq i\leq n, 
\end{equation}
where $B^{t}$ stands for the transpose of $B$, and $T_i, \hat{T}_i$ are as defined before with respect to the two bases. It follows that
\begin{align*}
\hat Q(z)=&det(z_{0}I+\sum_{j=1}^n z_{j}\hat T_{j}) \nonumber\\
=&\frac{det(z_{0}B^{t}+ z_{1}B^{t}\hat T_{1} + \cdots + z_{n}B^{t}\hat T_{n})}{det(B^t)}\nonumber\\
=&\frac{1}{det(B^{t})}det(z_{0}B^{t}+z_{1}\Sigma_{\alpha}b_{1\alpha}{T}_{\alpha}B^{t}+z_{2}\Sigma_{\alpha}b_{2\alpha} T_{\alpha}B^{t}+\cdots
+z_{n}\Sigma_{\alpha}b_{n\alpha} T_{\alpha}B^{t})\nonumber\\
=&det(z_{0}I+z_{1}\Sigma_{\alpha}b_{1\alpha} T_{\alpha}+\cdots+z_{n}\Sigma_{\alpha}b_{n\alpha}T_{\alpha})\nonumber \\
=&det\Bigg(z_{0}I+(z_{1}b_{11}+\cdots+z_{n}b_{n1})T_{1}+\cdots +(z_{1}b_{1n}+\cdots+z_{n}b_{nn}) T_{n}\Bigg)\nonumber.
\end{align*}
 Now setting $w_0=z_0$ and  $w_{i}=b_{1i}z_{1}+b_{2i}z_{2}+\cdots+b_{ni}z_{n}, \ 1\leq i\leq n$, we have 
\[(w_1,w_2,\hdots, w_n)=(z_1,z_2,\hdots, z_n)B,\]
and consequently,
\begin{equation}\label{eq:trans}
\hat Q(z)=\det(z_{0}I +\sum_{j=1}^n  w_{j}T_{j})=Q(zB).
\end{equation} 

Now suppose $\li_1=span\Big \{x_1,x_2\hdots,x_n \Big \}$ and $\li_2=span\Big \{y_1,y_2,\hdots,y_n \Big \}$ are two Lie algebras and consider their characteristic polynomials
\begin{align*}
 Q_{\li_1}(z)=\det \big(z_0 I+\sum_{j=1}^nz_j \ad {x_j}\big),\ \ 
Q_{\li_2}(w)=\det \big(w_0 I+\sum_{j=1}^nw_j \ad {y_j}\big).
\end{align*}
Assume $\phi: \li_1\to \li_2$ is an isomorphism and $\hat{x}_i= \phi(x_i),\ 1\leq i\leq n$.  Then
\[\hat{T}_{i,j}^k\hat{x}_k=[\hat{x}_i, \hat{x}_j]=\phi([x_i, x_j])=T_{i,j}^k\hat{x}_k,\]
which implies $T_j=\hat{T}_j,\ 1\leq j\leq n$, and therefore
\[Q_{\li_1}(z)=\det\left(z_0I + \sum_{j=1}^n z_j\hat{T}_j\right):=\hat{Q}(z).\]
Now suppose $\hat{x}_{i}=b_{ij}y_{j}$, then the lemma follows from (\ref{eq:trans}).

\end{proof}

Let $\aut(\li)$ denote the group of automorphisms on a Lie algebra $\li$. Then for a fixed basis $\{x_1,x_2,\cdots,x_n\}$, every $\phi\in \aut(\li)$ has a matrix representation $B_\phi=(b_{ij})$ given by $\phi(x_i)=b_{ij}x_j ,\ 1\leq i,j\leq n$. Lemma \ref{lem:iso} then implies the following theorem. 

 \begin{thm}\label{thm:aut}
Let $\li$ be a finite dimensional Lie algebra. Then the characteristic polynomial $Q_{\li}(z)$ is invariant under $\aut(\li)$ in the sense that for every $\phi\in \aut(\li)$ one has $Q_{\li}(z)=Q_{\li}(zB_\phi)$.
\end{thm}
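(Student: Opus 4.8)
The plan is to obtain this theorem directly from Lemma \ref{lem:iso} by specializing to the case in which the source and target Lie algebras coincide. The essential observation is that an automorphism $\phi \in \aut(\li)$ is, in particular, an isomorphism of $\li$ onto itself; hence the entire mechanism built in the lemma — culminating in the transformation formula $\hat{Q}(z) = Q(zB)$ of equation (\ref{eq:trans}) — applies without change. The theorem is therefore essentially a corollary, and the work consists only in matching notation.

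First I would set $\li_1 = \li_2 = \li$ in Lemma \ref{lem:iso}, retaining the single fixed basis $\{x_1,\dots,x_n\}$ for both copies. Given $\phi \in \aut(\li)$, put $\hat{x}_i = \phi(x_i)$. Because $\phi$ respects the bracket, the structure-constant computation carried out in the lemma yields $\hat{T}^k_{ij}\hat{x}_k = [\hat{x}_i,\hat{x}_j] = \phi([x_i,x_j]) = T^k_{ij}\hat{x}_k$, so that $\hat{T}_j = T_j$ for every $j$. Consequently the {\cp} expressed in the transformed data agrees with the original one, namely $Q_{\li}(z) = \det\big(z_0 I + \sum_{j} z_j \hat{T}_j\big) = \hat{Q}(z)$.

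The single point requiring care is the identification of the matrix $B$ furnished by the lemma with the matrix $B_\phi$ of $\phi$. Since the two bases are the same, writing the image vectors in the ambient basis gives $\phi(x_i) = \hat{x}_i = b_{ij} x_j$, so the transformation matrix $B = (b_{ij})$ appearing in the lemma is by definition precisely $B_\phi$. Invoking equation (\ref{eq:trans}) then delivers $Q_{\li}(z) = \hat{Q}(z) = Q_{\li}(z B_\phi)$, which is exactly the asserted invariance.

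Because the argument is a straightforward specialization of the preceding lemma, I do not anticipate any genuine mathematical obstacle. The only subtlety is bookkeeping: one must confirm that $B$ and $B_\phi$ coincide and that the change of variables $z \mapsto z B_\phi$ is applied in the direction dictated by equation (\ref{eq:trans}), rather than its inverse.
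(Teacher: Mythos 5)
Your proposal is correct and matches the paper's own treatment: the paper simply remarks that Lemma \ref{lem:iso} implies the theorem, and your specialization to $\li_1=\li_2=\li$ with $\hat{x}_i=\phi(x_i)=b_{ij}x_j$, identifying $B$ with $B_\phi$ and invoking (\ref{eq:trans}), is exactly the intended argument.
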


Theorem \ref{thm:aut} helps to reveal more information about the automorphism group.
Recall that the orthogonal group $O(n)=\{A\in GL_n: A^tA=I\}$. With the identification of $\phi\in \aut(\li)$ and its matrix representation $B_\phi=(b_{ij})$, we can state the following corollary.

\begin{corollary}
Let $\li$ be a $3$-dimensional simple Lie algebra, then $\aut(\li)$ is a subgroup of $O(3)$.
\end{corollary}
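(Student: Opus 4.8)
The plan is to reduce everything to the explicit characteristic polynomial of $\frak{su}(2)$ recorded in Example \ref{ex:su2} and then read the orthogonality of each automorphism matrix directly off the invariance in Theorem \ref{thm:aut}. First I would invoke the classification of low-dimensional simple Lie algebras: over $\C$ the unique $3$-dimensional simple Lie algebra up to isomorphism is $\frak{sl}(2,\C)\cong\frak{su}(2)$. Hence by Lemma \ref{lem:iso} there is $B\in GL_3$ with $Q_\li(z)=Q_{\frak{su}(2)}(zB)$, and substituting $Q_{\frak{su}(2)}(z)=z_0\bigl(z_0^2-4(z_1^2+z_2^2+z_3^2)\bigr)$ gives $Q_\li(z)=z_0\bigl(z_0^2-4\,z'M(z')^t\bigr)$, where $z'=(z_1,z_2,z_3)$ and $M=BB^t$ is symmetric and invertible.

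Second, since over $\C$ every nondegenerate ternary quadratic form is congruent to the standard one, there is $C\in GL_3$ with $CMC^t=I$; by the change-of-variables formula (\ref{eq:trans}) this $C$ corresponds to a change of basis of $\li$ after which $Q_\li(z)=z_0\bigl(z_0^2-4(z_1^2+z_2^2+z_3^2)\bigr)$ exactly. Fixing this basis, I take any $\phi\in\aut(\li)$ with matrix $B_\phi$. Theorem \ref{thm:aut} yields $Q_\li(z)=Q_\li(zB_\phi)$, and since $zB_\phi=(z_0,z'B_\phi)$ the right-hand side is $z_0\bigl(z_0^2-4\,z'B_\phi B_\phi^t(z')^t\bigr)$. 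Cancelling the common factor $z_0$ in the integral domain $\C[z_0,\dots,z_3]$ leaves the identity $z'(z')^t=z'B_\phi B_\phi^t(z')^t$ in $z'$; matching the coefficients of the monomials $z_iz_j$ then gives $B_\phi B_\phi^t=I$, i.e.\ $B_\phi\in O(3)$. As this holds for every $\phi$ and the assignment $\phi\mapsto B_\phi$ is a homomorphism, $\aut(\li)$ is realized as a subgroup of $O(3)$.

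The routine substitutions carry no real difficulty, so I expect the two genuinely delicate points to be bookkeeping ones. The first is the reduction to standard form: one must ensure the quadratic factor is brought to the exact Euclidean form $z_1^2+z_2^2+z_3^2$ rather than a general nondegenerate form, since otherwise the argument only places $B_\phi$ in the orthogonal group of $M$, a conjugate of $O(3)$; this is precisely why the congruence $CMC^t=I$ and the corresponding basis change are needed. The second is the passage from the polynomial identity to the matrix equation $B_\phi B_\phi^t=I$, which relies on a symmetric matrix being uniquely determined by its associated quadratic form—valid over $\C$—and on keeping the convention $zB=(z_0,z'B)$ consistent throughout.
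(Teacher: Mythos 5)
Your proposal is correct and follows essentially the same route as the paper: the explicit polynomial $Q(z)=z_0\bigl(z_0^2-4(z_1^2+z_2^2+z_3^2)\bigr)$ from Example \ref{ex:su2} together with the invariance $Q(zB_\phi)=Q(z)$ from Theorem \ref{thm:aut} forces $B_\phi$ to preserve the quadratic form $z_1^2+z_2^2+z_3^2$, hence $B_\phi B_\phi^t=I$. The only place you go beyond the paper is the explicit normalization $CMC^t=I$ of the quadratic form before fixing the basis; the paper handles this by simply invoking the uniqueness of the complex $3$-dimensional simple Lie algebra, which leaves implicit that for an arbitrary basis one only obtains a conjugate of a subgroup of $O(3)$, so your extra step is a welcome clarification rather than a different method.
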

\begin{proof}
The characteristic polynomial of $\frak{su}(2)$ is calculated in Example \ref{ex:su2}.
For any $\phi\in \aut(\frak{su}(2))$ its matrix representation $B_\phi$ preserves $Q(z)$ in the sense that $Q(zB_\phi)=Q(z)$, i.e., the change of variables $(z_1, z_2, z_3)$ by $(z_1, z_2, z_3)B_\phi$ preserves the quadratic form $z_1^2+z_2^2+z_3^2$, which concludes that $B_\phi\in O(3)$. Since up to isomorphism there is only one complex $3$-dimensional simple Lie algebra, we obtained the lemma.
\end{proof}

Some other facts follow from the proof of Lemma \ref{lem:iso}.
\begin{corollary}
Consider Lie algebra $\li=span \Big\{x_{1},x_{2},\hdots,x_{n} \Big\}$. Suppose $B=(b_{ij}) \in GL_n$, and $\hat x_{i}=b_{ij}x_{j}$.Then
\begin{align*}
B
\left(\begin{matrix}
t{r}T_{1} \\
t{r}T_{2}\\
\vdots \\
t{r}T_{n}
\end{matrix}\right)=
\left(\begin{matrix}
t{r}\hat{T}{_1} \\
t{r}\hat{T}{_2}\\
\vdots \\
t{r}\hat{T}{_n}
\end{matrix}\right).
\end{align*}
\end{corollary}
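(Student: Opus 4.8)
The plan is to read this identity off directly from equation (\ref{eq:mtrans}) in the proof of Lemma \ref{lem:iso}, which already records how the adjoint matrices $T_i$ transform under the change of basis $\hat{x}_i = b_{ij} x_j$. The corollary is purely a trace-theoretic consequence of that relation, so no new structural information about $\li$ is required; indeed the corollary is announced precisely as a fact following from that proof.

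First I would recall (\ref{eq:mtrans}), namely $\sum_\alpha b_{i\alpha} T_\alpha B^t = B^t \hat{T}_i$ for each $1 \le i \le n$. Since $B \in GL_n$, the transpose $B^t$ is invertible, so I would multiply on the right by $(B^t)^{-1}$ and rewrite the relation as the conjugation identity $\sum_\alpha b_{i\alpha} T_\alpha = B^t \hat{T}_i (B^t)^{-1}$.

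Next I would take the trace of both sides. By linearity the left-hand side becomes $\sum_\alpha b_{i\alpha} \operatorname{tr} T_\alpha$, while by invariance of the trace under conjugation the right-hand side becomes $\operatorname{tr} \hat{T}_i$. Hence $\sum_\alpha b_{i\alpha} \operatorname{tr} T_\alpha = \operatorname{tr} \hat{T}_i$ for every $i$, and this is exactly the $i$-th component of the asserted column identity: the coefficient $b_{i\alpha}$ is the $(i,\alpha)$ entry of $B$, so summing it against $\operatorname{tr} T_\alpha$ reproduces the $i$-th row of $B$ applied to the column vector $(\operatorname{tr} T_1,\dots,\operatorname{tr} T_n)^t$.

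I do not expect a genuine obstacle, as the whole argument rests on (\ref{eq:mtrans}) together with linearity and conjugation-invariance of the trace. The only point requiring care is the index bookkeeping — confirming that it is $B$ itself, rather than $B^t$ or $B^{-1}$, that acts on the left, which is exactly what the placement of the coefficient $b_{i\alpha}$ multiplying $T_\alpha$ in (\ref{eq:mtrans}) guarantees.
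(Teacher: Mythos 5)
Your argument is exactly the paper's own proof: rewrite (\ref{eq:mtrans}) as $\sum_\alpha b_{i\alpha}T_\alpha = B^t\hat{T}_i(B^t)^{-1}$, take traces, and use conjugation-invariance to get $\sum_\alpha b_{i\alpha}\operatorname{tr}T_\alpha = \operatorname{tr}\hat{T}_i$ for each $i$. The index bookkeeping you flag is handled correctly, so nothing further is needed.
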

\begin{proof}
Writing (\ref{eq:mtrans}) as $b_{i\alpha}T_{\alpha}=B^t \hat T_{i} (B^t)^{-1}$ and applying the trace to both sides, one obtains
\[tr( b_{i\alpha}T_{\alpha})=tr(B^t \hat T_{i} (B^t)^{-1})=tr \hat T_{i},\]
and it follows that $ b_{i\alpha}tr T_{\alpha}=tr \hat T_{i},  \forall {1\leq i\leq n}.$
\end{proof}

\begin{rmk} In the case $\phi\in \aut({\li})$, one has $T_i=\hat{T}_i$ for each $i$ and  therefore the vector 
$\left(t{r}T_{1}, t{r}T_{2},\cdots,t{r}T_{n}\right)^t$, if it is nonzero, is an eigenvector of $B_\phi$. \end{rmk}

\begin{corollary}
Suppose $\li=span  \Big\{x_{1},x_{2}\hdots,x_{n} \Big\}$ and $\hat \li=span \Big\{\hat x_{1},\hat x_{2}\hdots,\hat x_{n} \Big\}$ are two Lie algebras. If $t{r} (\hat T_{j})=0$ for all $1\leq j \leq n$ but $tr(T_{i}) \neq 0$ for some $i$, then $\li$ and $\hat \li$ are not isomorphic.
\end{corollary}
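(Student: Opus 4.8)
The plan is to argue by contradiction, reducing this two-algebra statement to the single-algebra, two-bases situation already treated in the preceding corollary. Suppose, to the contrary, that there is an isomorphism $\phi:\li\to\hat\li$. I will show this forces $tr\,T_i=0$ for every $i$, contradicting the hypothesis that $tr\,T_i\neq 0$ for some $i$.

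The first step is to reuse exactly the computation carried out in the proof of Lemma \ref{lem:iso}. Setting $x_i':=\phi(x_i)$, the elements $\{x_1',\dots,x_n'\}$ form a basis of $\hat\li$, and since $\phi$ is a Lie algebra homomorphism one has
\[[x_i',x_j']=\phi([x_i,x_j])=T_{ij}^k\,x_k',\]
so the structure constants of $\li$ in the basis $\{x_i\}$ coincide with those of $\hat\li$ in the basis $\{x_i'\}$. Consequently the adjoint matrices agree, $T_i'=T_i$ for all $i$, where $T_i'$ denotes the adjoint matrix of $\hat\li$ relative to $\{x_i'\}$; in particular $tr\,T_i'=tr\,T_i$.

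The second step is to compare the two bases $\{x_i'\}$ and $\{\hat x_i\}$ of the single Lie algebra $\hat\li$. Writing $x_i'=b_{ij}\hat x_j$ with $B=(b_{ij})\in GL_n$, the preceding corollary (applied inside $\hat\li$, with $\{\hat x_j\}$ as the old basis and $\{x_i'\}$ as the new one) yields
\[B\begin{pmatrix}tr\,\hat T_1\\\vdots\\tr\,\hat T_n\end{pmatrix}=\begin{pmatrix}tr\,T_1'\\\vdots\\tr\,T_n'\end{pmatrix}=\begin{pmatrix}tr\,T_1\\\vdots\\tr\,T_n\end{pmatrix}.\]
By hypothesis every $tr\,\hat T_j$ vanishes, so the left-hand side is $B\cdot 0=0$, forcing $tr\,T_i=0$ for all $i$. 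This contradicts the assumption, completing the proof.

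The argument is essentially bookkeeping, and I expect no serious obstacle; the one point requiring care is the conceptual bridge between the two settings — the present statement concerns two different Lie algebras, whereas the preceding corollary concerns one algebra viewed in two bases. The isomorphism $\phi$ supplies this bridge by transporting the basis of $\li$ to a basis of $\hat\li$ with identical structure constants. At a higher level, what is really being used is that the map $x\mapsto tr(\ad x)$ is a basis-free linear functional on a Lie algebra that is preserved by every isomorphism, so its identical vanishing is an isomorphism invariant; one could recast the whole proof in that language if a more conceptual formulation were preferred.
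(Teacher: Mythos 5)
Your proof is correct and is exactly the argument the paper intends: the corollary is stated without proof as an immediate consequence of the preceding trace-transformation corollary, and you supply precisely that deduction, using the step from the proof of Lemma \ref{lem:iso} that an isomorphism transports the basis of $\li$ to a basis of $\hat\li$ with the same structure constants (hence the same adjoint matrices and traces), and then applying the relation $B\,(\mathrm{tr}\,\hat T_j)_j=(\mathrm{tr}\,T_i)_i$ with the correct orientation of $B$. No gaps.
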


\subsection{Automorphism group of $\frak{sl}(2)$}

A finer study about unitary elements in $\aut(\li)$ can be made in the case $\li=\frak{sl}(2)$.

\begin{exmp}\label{ex:sl}
Consider the $3$-dimensional simple Lie algebra $\frak{sl}(2)$ with basis $\{H,X,Y\}$ such that 
\[[H,X]=2X ,\,\,[H,Y]=-2Y,\ \ [X,Y]=H.\]
Hence the matrix representations for $\ad H, \ad X$ and $\ad Y$ are
\begin{align*}
T_{H}=\left(\begin{matrix}
0 & 0 & 0\\
0 & 2 & 0\\
0 & 0 & -2
\end{matrix}\right)
,\,\,T_{X}=\left(\begin{matrix}
0 & 0 & 1\\
-2 & 0 & 0\\
0 & 0 & 0
\end{matrix}\right)
,\,\,T_{Y}=\left(\begin{matrix}
0 & -1 & 0\\
0 & 0 & 0\\
2 & 0 & 0
\end{matrix}\right),
\end{align*}
respectiely. And therefore the characteristic polynomial of $\frak{sl}(2)$ is 
\begin{align*}
Q(z)&=\det (z_0I+z_1T_H+z_2 T_X+z_3T_Y)\\
&=\det \left(\begin{matrix}
z_0 & -z_3 & z_2\\
-2z_2 & z_0+2z_1 & 0\\
2z_3 & 0 & z_0-2z_1
\end{matrix}\right)\\
&=z_{0}\left(z_{0}^2-4(z_{1}^2 + z_{2}z_{3})\right).
\end{align*}
\end{exmp}

The irreducible representations of $\frak{sl}(2)$ have been fully described in \cite{FH,Hu}. And the characteristic polynomials with respect to these representations were recently determined in \cite{CCD,HZ}, where it is shown that if $\pi: \frak{sl}(2)\to \frak{gl}({m+1})$ is an irreducible representation (which is unique) then the \cp
\begin{equation}
\label{eq:csl}
Q_\pi(z)= \begin{dcases}
      \prod_{j=0}^{(m-1)/2 }\left(z_0^2-(m-2j)^2(z_1^2+z_2z_3)\right), & m\ \text{odd};\\
       z_0\prod_{j=0}^{(m/2)-1 }\left(z_0^2-(m-2j)^2(z_1^2+z_2z_3)\right), & m\ \text{even.}
    \end{dcases}
\end{equation}
 One observes that the variables $z_1, z_2, z_3$ appear only in the quadratic form 
$z_{1}^2 + z_{2}z_{3}$ in $Q_\pi(z)$ . Since by Theorem \ref{thm:aut} the {\cp} $Q(z)$ in Example \ref{ex:sl} is invariant under $\aut(\frak{sl}(2))$, the above $Q_\pi(z)$ is invariant under $\aut(\frak{sl}(2))$. Further, since by Weyl's theorem every representation of a semi-simple Lie algebra is the direct sum of irreducible representations, foregoing observations lead to the following fact.

\begin{corollary}
Let $\pi$ be any linear representation of $\frak{sl}(2)$. Then the {\cp} $Q_\pi(z)$ is invariant under $\aut(\frak{sl}(2))$.
\end{corollary}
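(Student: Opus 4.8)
The plan is to reduce the statement to three observations: first, that every automorphism of $\frak{sl}(2)$ preserves the quadratic form $q(z'):=z_1^2+z_2z_3$; second, that for an irreducible $\pi$ the {\cp} $Q_\pi$ depends on $z_1,z_2,z_3$ only through $q$; and third, that the {\cp} of a direct sum of representations is the product of the individual ones, so that Weyl's theorem handles the general case. Since almost all the needed ingredients are already in place, the task is mostly to assemble them carefully.

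For the first step I would apply Theorem \ref{thm:aut} to the adjoint {\cp} computed in Example \ref{ex:sl}, namely $Q(z)=z_0\big(z_0^2-4q(z')\big)$. Recall that a matrix $B_\phi\in GL_3$ acts on $z=(z_0,z')$ by fixing $z_0$ and sending $z'$ to $z'B_\phi$. Hence the invariance identity $Q(zB_\phi)=Q(z)$ reads $z_0\big(z_0^2-4q(z'B_\phi)\big)=z_0\big(z_0^2-4q(z')\big)$ as polynomials in $(z_0,z')$. Cancelling the common factor $z_0$ and then the $z_0^2$ terms, one obtains $q(z'B_\phi)=q(z')$ identically. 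Thus every $B_\phi$ with $\phi\in\aut(\frak{sl}(2))$ leaves the quadratic form $q$ invariant.

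For the second step, note that by (\ref{eq:csl}) the {\cp} $Q_\pi$ of an irreducible representation of dimension $m+1$ is a product of factors of the form $z_0$ and $z_0^2-(m-2j)^2 q(z')$, each depending on $z'$ only through $q(z')$. Since the change of variables $z'\mapsto z'B_\phi$ fixes $q$ by the first step and fixes $z_0$, every such factor, and therefore their product $Q_\pi$, is unchanged. This settles invariance for irreducible $\pi$; because the irreducible representation of each dimension is unique up to isomorphism and isomorphic (hence conjugate) representations share the same {\cp}, formula (\ref{eq:csl}) indeed applies to every irreducible $\pi$ regardless of the chosen representative.

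Finally, for an arbitrary finite dimensional $\pi$, Weyl's theorem gives a decomposition $\pi\cong\bigoplus_i\pi_i$ into irreducibles, so a single change of basis simultaneously conjugates each $\pi(x_j)$ to the block-diagonal matrix with blocks $\pi_i(x_j)$. The linear pencil is then block-diagonal up to conjugation, whence $Q_\pi(z)=\prod_i Q_{\pi_i}(z)$, a product of factors each invariant under $\aut(\frak{sl}(2))$ by the second step. Therefore $Q_\pi(zB_\phi)=Q_\pi(z)$ for every $\phi$, as desired. I expect the only point requiring genuine care to be the passage in the first step from invariance of the full cubic $Q(z)$ to invariance of the quadratic form alone; everything else follows directly from the explicit formula (\ref{eq:csl}) and the multiplicativity of the determinant on block-diagonal pencils.
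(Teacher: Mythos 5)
Your proposal is correct and follows essentially the same route as the paper: invariance of the adjoint characteristic polynomial from Theorem \ref{thm:aut} forces invariance of the quadratic form $z_1^2+z_2z_3$, formula (\ref{eq:csl}) shows each irreducible $Q_\pi$ depends on $z'$ only through that form, and Weyl's theorem together with multiplicativity of the determinant on block-diagonal pencils handles general representations. You merely spell out more explicitly the cancellation step extracting $q(z'B_\phi)=q(z')$ from $Q(zB_\phi)=Q(z)$, which the paper leaves implicit.
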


The invariance of $z_{1}^2 + z_{2}z_{3}$ under $\aut(\frak{sl}(2))$ also reveals more details about the latter. Let ${\mathbb T}$ denote the unit circle.

\begin{corollary}
The subgroup of unitary elements in $\aut\left(\frak{sl}(2,\mathbb{C})\right)$ is 
\[\left\{\left(\begin{matrix}
1 & 0 & 0\\
0 & \alpha & 0\\
0 & 0 & \overline{\alpha}
\end{matrix}\right),\ \ 
\left(\begin{matrix}
-1 & 0 & 0\\
0 & 0 & \beta\\
0 & \overline{\beta} & 0
\end{matrix}\right),\ \alpha, \beta\in {\mathbb T}\right\}.\]
\end{corollary}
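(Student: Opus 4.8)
The plan is to pin down the matrix $B=B_\phi$ of a unitary automorphism $\phi$ by intersecting two constraints: invariance of the quadratic form and unitarity. By Theorem~\ref{thm:aut} together with the computation of $Q(z)$ in Example~\ref{ex:sl}, every $\phi\in\aut(\frak{sl}(2))$ fixes $z_0$ and preserves the form $q(z_1,z_2,z_3)=z_1^2+z_2z_3$. Writing $q(v)=vMv^{t}$ for the row vector $v=(z_1,z_2,z_3)$ and the symmetric matrix
\[
M=\begin{pmatrix}1&0&0\\0&0&\tfrac12\\0&\tfrac12&0\end{pmatrix},
\]
invariance becomes $BMB^{t}=M$. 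If in addition $\phi$ is unitary, then $\bar B^{t}B=I$, i.e. $\bar B=(B^{t})^{-1}$. First I would combine these two identities: from $BMB^{t}=M$ one gets $B^{t}=M^{-1}B^{-1}M$, and substituting into $\bar B=(B^{t})^{-1}$ yields the single clean relation
\[
M\bar B=BM.
\]

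Next I would read off this matrix equation entry by entry. Because $M$ fixes the index $1$ and only couples the indices $2$ and $3$, the equations decouple. The $(1,2),(1,3)$ and $(2,1),(3,1)$ entries force $b_{12}=b_{13}=b_{21}=b_{31}=0$ and make $b_{11}$ real, so $B$ is block diagonal with a real scalar $b_{11}$ and a $2\times2$ block $B'$ on indices $2,3$; unitarity then gives $b_{11}=\pm1$. The remaining entries of $M\bar B=BM$ impose $b_{33}=\bar b_{22}$ and $b_{32}=\bar b_{23}$, so
\[
B'=\begin{pmatrix} a & c\\ \bar c & \bar a\end{pmatrix},\qquad a=b_{22},\ c=b_{23}.
\]
Imposing unitarity on $B'$ (orthonormal columns in the Hermitian inner product) gives $|a|^2+|c|^2=1$ together with $a\bar c=0$, hence either $c=0$, leaving $B'=\operatorname{diag}(\alpha,\bar\alpha)$ with $\alpha\in\mathbb T$, or $a=0$, leaving $B'$ anti-diagonal with entries $\beta,\bar\beta$, $\beta\in\mathbb T$. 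This produces exactly four candidate matrices.

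The last step is to discard the two spurious candidates, and this is where I expect the only real subtlety to lie. Preserving $q$ is a \emph{necessary} condition for an automorphism but not a sufficient one, so two of the four unitary $q$-preserving matrices, namely $\operatorname{diag}(-1,\alpha,\bar\alpha)$ and the anti-diagonal one with $b_{11}=1$, must be ruled out. I would eliminate them by testing the defining bracket $[X,Y]=H$ (equivalently $[H,X]=2X$): for each spurious family the induced linear map fails to respect this bracket. A clean alternative is the determinant: every automorphism of $\frak{sl}(2)$ has $\det B_\phi=1$ (its adjoint operators are traceless, so $\det\exp(\ad x)=1$), while the two spurious families have determinant $-1$. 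Either way the survivors are precisely $b_{11}=1$ with the diagonal block and $b_{11}=-1$ with the anti-diagonal block, which are the two stated families. Finally I would verify sufficiency by a direct check that each listed matrix is unitary and respects all three brackets $[H,X]=2X$, $[H,Y]=-2Y$, $[X,Y]=H$, completing the characterization.
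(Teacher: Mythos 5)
Your proposal is correct and follows the same overall skeleton as the paper's proof: impose $BMB^{t}=M$ from Theorem \ref{thm:aut}, impose unitarity, reduce to four block-structured candidates, and eliminate two of them by checking the bracket relations. The one genuine difference is the middle step. The paper extracts the zero pattern of $B$ by an equality-case argument: from $\phi_{11}^2+\phi_{12}\phi_{13}=1$ and $|\phi_{11}|^2+|\phi_{12}|^2+|\phi_{13}|^2=1$ it squeezes a chain of triangle inequalities to force $\phi_{12}\phi_{13}=0$, and then invokes the equality case of Cauchy--Schwarz to handle the remaining $2\times 2$ block. You instead merge the two hypotheses into the single identity $M\bar B=BM$ and read off entries, which yields $b_{12}=b_{13}=b_{21}=b_{31}=0$, $b_{11}\in\mathbb{R}$, $b_{22}=\bar b_{33}$ and $b_{23}=\bar b_{32}$ in one stroke; this is cleaner and avoids the inequality bookkeeping (I checked: the $(1,2)$ and $(1,3)$ entries give $\bar b_{12}=\tfrac12 b_{13}$ and $\bar b_{13}=\tfrac12 b_{12}$, whence $b_{12}=b_{13}=0$, and similarly for the first column). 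Both routes must end with the bracket check, since preserving $z_1^2+z_2z_3$ together with unitarity is only a necessary condition; your determinant alternative ($\det B_\phi=1$ for every automorphism) does correctly separate the four candidates, but it silently uses that $\aut(\frak{sl}(2,\C))$ is generated by exponentials $\exp(\ad x)$, i.e.\ has no outer part --- a standard but nontrivial input --- so the direct bracket verification you list first is the safer closing step and is exactly what the paper does.
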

\begin{proof}
The characteristic polynomial of $\frak{sl}(2)$ is computed in Example \ref{ex:sl} as 
$Q(z)=z_{0}(z_{0}^2-4(z_{1}^2 + z_{2}z_{3})).$ Write
\begin{align*}
&z_{1}^2+z_{2}z_{3}=(z_{1},z_{2},z_{3})M\left(\begin{matrix}
z_{1}\\
z_{2}\\
z_{3}
\end{matrix}\right),\,\,\textnormal{where}\,\,M=\left(\begin{matrix}
1 & 0 & 0\\
0 & 0 & \frac{1}{2}\\
0 & \frac{1}{2} & 0
\end{matrix}\right).
\end{align*}
Suppose $\phi$ is a unitary element in $\aut(\frak{sl}(2))$ (for the moment we assume such element exists). Here again for convenience we shall not distinguish $\phi$ from its matrix representation $B_\phi$. Then it follows from Theorem \ref{thm:aut} that $\phi M \phi^t=M$, where $\phi=(\phi_{ij})_{3 \times 3}$, i.e., 
\begin{align*}
&\left(\begin{matrix}
\phi_{11} & \phi_{12} & \phi_{13}\\
\phi_{21} & \phi_{22} & \phi_{23}\\
\phi_{31} & \phi_{32} & \phi_{33}
\end{matrix}\right)
\left(\begin{matrix}
1 & 0 & 0\\
0 & 0 & \frac{1}{2}\\
0 & \frac{1}{2} & 0
\end{matrix}\right)
\left(\begin{matrix}
\phi_{11} & \phi_{21} & \phi_{31}\\
\phi_{12} & \phi_{22} & \phi_{32}\\
\phi_{13} & \phi_{23} & \phi_{33}
\end{matrix}\right)=
\left(\begin{matrix}
1 & 0 & 0\\
0 & 0 & \frac{1}{2}\\
0 & \frac{1}{2} & 0
\end{matrix}\right).\\
\end{align*}
This results in the following equations :
\begin{numcases}{}
\phi_{11}^2 + \phi_{13} \phi_{12}=1\\
\phi_{21}^2 + \phi_{23} \phi_{22}=0\\
\phi_{31}^2 + \phi_{33} \phi_{32}=0\\
\phi_{21}\phi_{11} + \frac{1}{2} \phi_{23}\phi_{12}+\frac{1}{2} \phi_{22}\phi_{13}=0\\
\phi_{31}\phi_{11} + \frac{1}{2} \phi_{33}\phi_{12}+\frac{1}{2} \phi_{32}\phi_{13}=0\\
\phi_{21}\phi_{31} + \frac{1}{2} \phi_{23}\phi_{32}+\frac{1}{2} \phi_{22}\phi_{33}=\frac{1}{2}
\end{numcases}
Also since $\phi$ is a unitary matrix we have
\begin{numcases}{}
 |\phi_{11} \vert^2 \,+\,\vert \phi_{12} \vert^2\,+\,\vert \phi_{13} \vert^2 =1\\
 |\phi_{21} \vert^2\,+\,\vert \phi_{22} \vert^2\,+\,\vert \phi_{23} \vert^2 =1\\	
 |\phi_{31} \vert^2\,+\,\vert \phi_{32} \vert^2\,+\,\vert \phi_{33} \vert^2 =1
\end{numcases}\\
From (2.5) and (2.11) and the fact that
\begin{align*}
1 &=\vert \phi_{11}^2 +\,\phi_{12} \phi_{13} \vert \,\, \leq \,\, \vert \phi_{11} \vert^2 + \, \vert \phi_{12} \phi_{13}\, \vert \,\,\leq \vert \phi_{11} \vert ^2 + 2\,\vert \phi_{12} \phi_{13} \vert\\
&\leq \vert \phi_{11} \vert ^2 + \vert \phi_{12} \vert ^2 + \vert \phi_{13} \vert ^2=1
\end{align*}
we have $|\phi_{12}\phi_{13}|=2|\phi_{12}\phi_{13}|$ which implies $\phi_{12}\phi_{13}=0$. Hence by  (2.5) we have $\phi_{11}=\pm 1$, and consequently $\phi_{12}=\phi_{13}=\phi_{21}=\phi_{31}=0$ since $\phi$ is unitary. Therefore,
\begin{flalign}
\phi=\left(\begin{matrix}
\pm 1 & 0 & 0\\
0 & \phi_{22} & \phi_{23}\\
0 & \phi_{32} & \phi_{33}
\end{matrix}\right).\nonumber
\end{flalign}
By (2.12) and (2.13) we get
\begin{numcases}{}
 \vert \phi_{22} \vert^2 \,+\,\vert \phi_{23} \vert^2\,=1 \nonumber \\
\vert \phi_{32} \vert^2\,+\,\vert \phi_{33} \vert^2\,=1
\end{numcases}
Now (2.10) gives $\phi_{22}\,\,\phi_{33} \,+\, \phi_{23}\phi_{32}\,=1,$
i.e., the inner product 
\begin{equation}
\Bigg \langle (\phi_{22},\phi_{23}),(\bar \phi_{33},\bar \phi_{32}) \Bigg \rangle=1,
\end{equation}
 which imply $\phi_{22}=\bar \phi_{33}$ and $\phi_{23}=\bar \phi_{32}.$
Moreover, it follows from the equalities (2.6) that $\phi_{23}\phi_{22}=0$.  
In the case $\phi_{23}=0$, Equality (2.15) gives $ \phi_{22}\phi_{33}=1;$ 
in the case $\phi_{22}=0$, we have $\phi_{23}\phi_{32}=1$. 
Therefore $\phi$ has the following possible choices:
\begin{flalign*}
\phi=\left(\begin{matrix}
\pm 1 & 0 & 0\\
0 & \alpha & 0\\
0 & 0 & \overline{\alpha}
\end{matrix}\right)\ 
\text{and}\ 
\phi=\left(\begin{matrix}
\pm 1 & 0 & 0\\
0 & 0 & \beta\\
0 & \overline{\beta} & 0
\end{matrix}\right),
\end{flalign*}
where $\alpha$ and $\beta$ are unimodular.

From the other side, being an automorphism of $\frak{sl}(2)$ the transformation $\phi$ needs to satisfy additional conditions on its basis, namely
\begin{enumerate}[1)]
\item $\phi([H,X])=[\phi(H),\phi(X)],$
\item $\phi([H,Y])=[\phi(H),\phi(Y)],$
\item $\phi([X,Y])=[\phi(X),\phi(Y)].$
\end{enumerate}
A quick check of these conditions establishes the corollary.
\end{proof}

\section{Semidirect sum}

In this section we consider the characteristic polynomial of a semidirect sum of two Lie algebras. 
Quite a few definitions need to be recalled to proceed. 

\begin{definition}
A derivation $D$ of a given Lie algebra $\li$ over a field $\mathbb{F}$,\,\,is a linear map $D: \li \rightarrow \li$ satisfying the Leibnitz law with respect to the Lie bracket:
\[D([x,y])=[D(x),y]+[x,D(y)] , \forall x,y \in \li.\]
\end{definition}
\begin{definition}
A derivation $D$ of $\li$ is called inner if there exists an element $z \in \li$ such that $D=\ad z$, i.e., $D(x)=[z,x],\ \forall x\in \li$.
\end{definition}
The set of of all derivations of $\li$, denoted by $Der(\li)$, is a vector space which can be given a Lie algebra structure. The set $Inn(\li)$ of inner derivations form an ideal of $Der(\li).$

\begin{definition}
Let $\li_{1}$ and $\li_{2}$ be Lie algebras over the field $\mathbb{F}$ and $\tau: \li_{1} \to Der(\li_{2})$ be a fixed homomorphism. The semidirect sum
$\li_1\niplus_\tau \li_2$ is the direct sum of their underlying vector spaces $\li_1\oplus \li_2$ with the Lie bracket defined as
\begin{align*} 
[(x_{1},y_{1}),(x_{2},y_{2})]=\left([x_{1},x_{2}],\tau (x_{1})(y_{2})-\tau(x_{2})(y_{1})+[y_{1},y_{2}]\right).
\end{align*}
\end{definition}
Clearly, the semidirect sum $\li_1\niplus_\tau \li_2$ varies with the choice of $\tau$.
In general $\li_{1}$ is a subalgebra and $\li_{2}$ is an ideal in $\li_1\niplus_\tau \li_2$.
In the special case when $\tau$ is identically zero, the semidirect sum turns into the direct sum $\li_{1} \oplus \li_{2}$. A good treatise on semidirect sum can be found in \cite{Hu} and \cite{Os}.

Now suppose $\li=\li' \niplus_\tau \li''$ is a semidirect sum of two Lie algebras.
Assume $\dim \li'=k>0$, $\dim \li''=n-k \ge 0$, and let $\Big\{x_{1},x_{2},\hdots,x_{k}  \Big\}$ and $\Big\{x_{k+1},x_{k+2},\hdots,x_{n}  \Big\}$ be bases for $\li'$ and $\li''$, respectively. Then on  $\li' \niplus_\tau \li''$ the adjoint representation $\ad x_{i}$ can be represented by the block matrices 
\[T_{i}=\left(\begin{matrix}
T'_{i} & 0\\
0 & \tau(x_i)
\end{matrix}\right),\ 1 \leq i \leq k; \ \ \ T_j=\left(\begin{matrix}
0 & 0\\
X_{j} & T''_{j}
\end{matrix}\right),\ k+1 \leq j \leq n,\]\\
where $T'_{i}$ is the $k \times k$ matrix for the adjoint representation of $x_{i}$ on $\li'$, $T''_{j}$ is the $(n-k)\times (n-k)$ matrix for the adjoint representation of $x_{j}$ on $\li''$,
$\tau(x_i)\in Der(\li'')$ is an $(n-k)\times (n-k)$ matrix, and
$X_{j}$ is the $(n-k)\times k$ matrix for the adjoint representation of $x_{j}$ on $\li'$. Hence
\begin{align*}
&z_{0}I + \sum_{j=1}^n z_{j}T_{j}\\
&=\left(\begin{matrix}
z_{0}I_{k} + \sum_{j=1}^k z_{j}T'_{j} & 0\\
\sum_{j=k+1}^n z_{j}X_{j} & z_{0}I_{n-k} + \sum_{j=k+1}^n z_{j}T''_{j} + \sum_{i=1}^kz_{i}\tau(x_{i})
\end{matrix}\right),
\end{align*}
which implies 
\begin{equation}\label{eq:prod}
Q_\li(z)=Q_{\li'}(z)\det\left(z_{0}I_{n-k} + \sum_{j=k+1}^n z_{j} T''_{j} + \sum_{i=1}^k z_{i} \tau(x_{i})\right).
\end{equation}

Two immediate observations are made as follows.
\begin{rmk}\label{rm:prod}
One observes that if $\li''$ is abelian, then $T''_{j}=0$ for $k+1 \leq j \leq n$ and hence $Q_{\li}(z)=Q_{\li'}(z)\det \left(z_{0}I +\sum_{i=1}^k z_{i}\tau(x_{i})\right)$, where the second factor is the {\cp} for the matrices $\tau(x_{i}), 1\leq i\leq k$. On the other hand, if $\tau=0$ then $\li=\li'\oplus \li''$ and Equation \ref{eq:prod} implies $Q_{\li}(z)=Q_{\li'}(z)Q_{\li''}(z).$
\end{rmk}

\begin{definition}
A Lie algebra $\li$ is said to be simple if it contains no proper ideal, and it is said to be semisimple if it is a direct sum of simple Lie algebras.
\end{definition}

Consider a Lie algebra $\li$ with basis $\{x_1, ..., x_n\}$ and let \[L(z)=z_1\ad {x_1}+\cdots +z_n\ad {x_n}.\] It is not hard to see that $L(z)$ has a nontrivial kernel which in particular contains the element $z_1{x_1}+\cdots +z_nx_n$. The following lemma is thus immediate.
\begin{lemma}\label{lem:z0}
Let $\li$ be a finite dimensional Lie algebra. Then its {\cp} $Q_\li(z)$ has a factor $z_0$.
\end{lemma}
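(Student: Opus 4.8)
The plan is to exhibit $z_0$ as a factor of $Q_\li(z)$ by showing that the linear pencil $z_0 I + L(z)$ becomes singular along the hyperplane $z_0 = 0$, so that $Q_\li(z)$ vanishes identically when we set $z_0 = 0$. Since $Q_\li(z) = \det(z_0 I + L(z))$ is a polynomial in $z_0, z_1, \ldots, z_n$, the factor theorem (applied to the single variable $z_0$ with $z_1, \ldots, z_n$ treated as parameters) guarantees that $z_0$ divides $Q_\li(z)$ precisely when $Q_\li(0, z_1, \ldots, z_n) \equiv 0$.

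First I would observe, as already noted in the excerpt, that the operator $L(z) = z_1 \ad x_1 + \cdots + z_n \ad x_n = \ad(z_1 x_1 + \cdots + z_n x_n)$ is itself the adjoint action of the element $u := z_1 x_1 + \cdots + z_n x_n \in \li$. Because the adjoint representation of any element kills that same element, $L(z)(u) = [u, u] = 0$, so $u$ lies in the kernel of $L(z)$ for every choice of $(z_1, \ldots, z_n)$. This means $L(z)$ is singular whenever $u \neq 0$, i.e.\ whenever $(z_1, \ldots, z_n) \neq 0$, and hence $\det L(z) = 0$ for all $(z_1, \ldots, z_n)$ on a Zariski-dense set, forcing $\det L(z) \equiv 0$ as a polynomial identity.

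Next I would connect this to the claimed factorization by setting $z_0 = 0$ in the definition of the characteristic polynomial:
\[
Q_\li(0, z_1, \ldots, z_n) = \det\bigl(L(z)\bigr) \equiv 0.
\]
Viewing $Q_\li$ as a polynomial in $z_0$ with coefficients in $\C[z_1, \ldots, z_n]$, the vanishing of its value at $z_0 = 0$ is exactly the statement that the constant term (in $z_0$) is the zero polynomial, so $z_0 \mid Q_\li(z)$. This completes the argument.

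There is essentially no serious obstacle here; the lemma is a direct consequence of the self-annihilating property of the adjoint map together with the factor theorem. The only point requiring a word of care is the passage from ``$\det L(z)$ vanishes on the open dense set where $(z_1,\ldots,z_n)\neq 0$'' to ``$\det L(z)$ is identically the zero polynomial,'' which follows since a polynomial vanishing on a nonempty Zariski-open subset of $\C^n$ must be the zero polynomial. Alternatively one may avoid the density remark entirely by noting that $u$ lies in the kernel at every point, so the matrix $L(z)$ has nontrivial kernel for every $(z_1,\ldots,z_n)$ (including, trivially, at the origin where $L=0$), giving $\det L(z)=0$ pointwise and hence as a polynomial.
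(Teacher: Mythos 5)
Your argument is correct and is essentially identical to the paper's: the paper also observes that $L(z)=\ad(z_1x_1+\cdots+z_nx_n)$ annihilates $z_1x_1+\cdots+z_nx_n$, hence is singular, and concludes immediately that $z_0$ divides $Q_\li$. Your write-up merely makes explicit the factor-theorem step and the (harmless) case $(z_1,\ldots,z_n)=0$, which the paper leaves to the reader.
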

Hence it makes better sense to consider the polynomial $Q_\li^*(z)=\frac{Q_{\li}(z)}{z_{0}}$.
\begin{corollary}\label{thm:irr}
If $\li$ is a Lie algebra of dimension $n\geq 3$ and $Q_\li^*(z)$ is irreducible, then $\li$ is simple.
\end{corollary}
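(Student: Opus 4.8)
The plan is to prove the contrapositive: assuming $\li$ is not simple, I will produce a nontrivial factorization of $Q_\li^*(z)$. The engine is the block-triangular structure forced by a proper nonzero ideal. Suppose $I$ is such an ideal with $\dim I=m$, and choose a basis $\{x_1,\dots,x_n\}$ of $\li$ adapted to $I$, say with $\{x_{n-m+1},\dots,x_n\}$ a basis of $I$. Because $[\li,I]\subseteq I$, each $\ad x_i$ is block lower triangular in this basis: an $(n-m)\times(n-m)$ block $A_i$ records the action induced on $\li/I$, an $m\times m$ block $D_i$ records $\ad x_i|_I$, and the upper-right block vanishes. Exactly as in the computation preceding (\ref{eq:prod}), the pencil $z_0I+\sum_j z_j\ad x_j$ is then block lower triangular, so
\[
Q_\li(z)=F_1(z)\,F_2(z),\qquad F_1=\det\!\Big(z_0I_{n-m}+\sum_j z_jA_j\Big),\quad F_2=\det\!\Big(z_0I_{m}+\sum_j z_jD_j\Big),
\]
with $\deg F_1=n-m$ and $\deg F_2=m$. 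Since the $A_j$ are precisely the adjoint matrices of the quotient Lie algebra $\li/I$ (with $A_j=0$ whenever $x_j\in I$), $F_1$ is the {\cp} of $\li/I$, so Lemma \ref{lem:z0} gives $z_0\mid F_1$. Hence $Q_\li^*=(F_1/z_0)\,F_2$ with $\deg(F_1/z_0)=n-m-1$.

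This factorization is nontrivial --- both factors non-constant --- precisely when $1\le m\le n-2$. So the whole argument reduces to showing that a non-simple $\li$ of dimension $n\ge3$ admits a proper nonzero ideal of dimension at most $n-2$, once one degenerate regime is disposed of. I would split on solvability. If $\li$ is solvable I use no ideal at all: by the theorem of \cite{HZ} quoted in the introduction, $Q_\li$ is a product of linear forms, hence $Q_\li^*$ is a product of $n-1\ge2$ linear forms and is reducible.

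If $\li$ is not solvable, let $R$ be its solvable radical. Non-solvability gives $R\ne\li$, and $\li/R$ is a nonzero semisimple Lie algebra, hence of dimension at least $3$; thus when $R\ne0$ the radical is itself a proper nonzero ideal with $\dim R\le n-3\le n-2$. When $R=0$ the algebra $\li$ is semisimple and, being non-simple, splits as a direct sum of at least two simple ideals, each of dimension $\ge3$ and therefore of dimension $\le n-3$; any such summand is the required ideal (and indeed Remark \ref{rm:prod} already displays $Q_\li$ as a product here). In every case I obtain an ideal of dimension between $1$ and $n-2$, feed it into the factorization above, and conclude that $Q_\li^*$ is reducible.

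The step I expect to be the genuine obstacle is exactly the one this design circumvents: an arbitrary ideal can have codimension $1$, in which case $F_1$ is a constant multiple of $z_0$ and the factorization of $Q_\li^*$ collapses to the single factor $F_2$, yielding nothing. The substance of the proof is therefore not the determinant computation but the structural claim that a non-simple, non-solvable Lie algebra always carries an ideal of codimension at least $2$ --- guaranteed here because the radical has a semisimple quotient of dimension $\ge3$ (so codimension $\ge3$ whenever it is proper and nonzero), with the purely semisimple and the solvable cases treated separately. I would double-check the boundary case $n=3$, where $n-2=1$: there the non-solvable, non-simple regime is in fact vacuous, and one should verify that the dimension bookkeeping $\dim I\le n-2$ remains consistent throughout.
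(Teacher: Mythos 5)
Your proof is correct, and it reaches the conclusion by a recognizably different route from the paper's. The paper argues directly rather than by contraposition: it invokes the Levi decomposition $\li=\li'\niplus_{\ad}\li''$ and the already-derived factorization (\ref{eq:prod}) to write $Q^*_\li=Q^*_{\li'}R$, then uses irreducibility to force one factor to be constant --- the first case makes $\li$ solvable and is eliminated by the Hu--Zhang theorem, the second makes $\li$ semisimple and is eliminated by Remark \ref{rm:prod} unless $\li$ is simple. Your contrapositive uses the same three ingredients (a block-triangular determinant factorization along an ideal, Hu--Zhang for the solvable branch, and the fact that a nonzero semisimple algebra has dimension at least $3$), but replaces the Levi semidirect-sum formula by the more general observation that any proper nonzero ideal $I$ --- with no complementary subalgebra required --- already makes the pencil block lower triangular and yields $Q^*_\li=(F_1/z_0)F_2$ with $F_1=Q_{\li/I}$. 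What your version buys is an explicit accounting of the one place the naive argument fails (a codimension-one ideal gives $F_1$ proportional to $z_0$ and no factorization of $Q^*_\li$) together with the structural reason this never bites: the radical, when proper and nonzero, has codimension at least $3$. The paper's version is shorter because the irreducibility dichotomy absorbs this bookkeeping, at the cost of leaving implicit why ``$Q^*_{\li'}$ constant'' forces $\li'=0$ --- which is the same dimension-at-least-$3$ fact for nonzero semisimple algebras. Both arguments are sound; yours is the more self-contained of the two.
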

\begin{proof}
By Levi's decompostion every Lie algebra $\li$ can be represented as the semidirect sum $\li=\li' \niplus_{\ad} \li''$, where $\li'$ is a semisimple subalgebra of $\li$ and $\li''$ is the radical of $\li$, i.e., 
the maximal solvable ideal in $\li$.
Denoting the determinant in Equation \ref{eq:prod} by $R(z)$, we then have $Q^*_\li(z)=Q^*_{\li'}(z)R(z)$. Now since $Q_\li^*(z)$ is irreducible, either $Q^*_{\li'}$ is constant or $R(z)$ is constant.
In the former case, we have $\li=\li''$ which is solvable and hence $Q_\li(z)$ is a product of $n$ linear factors by \cite{HZ}. This contradicts with the irreducibility of $Q_\li^*(z)$. On the other hand, if $R(z)$ is constant then $\li=\li'$ which is semisimple. If $\li$ is a direct sum of more than one simple Lie algebras then $Q_\li(z)$ is the product of the {\cp} of these simple Lie algebras by Remark \ref{rm:prod} which again contradicts with the irreducibility of $Q^*_\li(z)$. This concludes that $\li$ is simple.
\end{proof}

Since finite dimensional complex simple Lie algebras have been fully characterized (\cite{Hu}), it is tempting to ask the following question.
\begin{question}
For which type of finite dimensional complex simple Lie algebras $\li$ is the polynomial $Q^*_\li(z)$ irreducible?
\end{question}

\section{Solvable Lie algebra and spectral matrix}

Given a Lie algebra $\li$, its derived algebra $[\li, \li]$ is the linear span of all elements $[s, t], s, t\in \li$. We set $\li^0=\li=\li_0$, $\li^k=[\li^{k-1}, \li^{k-1}]$ and $\li_k=[\li, \li_{k-1}]$. The algebra $\li$ is said to be solvable if $\li^k=\{0\}$ for some integer $k\geq 1$, and it is said to be nilpotent if $\li_k=\{0\}$ for some integer $k\geq 1$. Clearly, since $\li^k\subset \li_k$, every nilpotent Lie algebra is solvable. The largest nilpotent ideal in $\li$ is called the nilradical of $\li$ and is denoted by $nil(\li)$.
Engel's theorem asserts that a finite dimensional Lie algebra $\li$ is nilpotent if and only if for every element $x\in \li$ the matrix $\ad x$ is nilpotent.
It follows from Engel's theorem that the nilradical $nil(\li)$ is the set of all elements $x\in \li$ whose adjoint representation $\ad x$ is a nilpotent matrix, i.e., the spectrum $\sigma(\ad x)=\{0\}$. Equivalently, there exists integer $s\geq 1$ such that $(\ad x)^s=0$. The smallest such integer $s$ for which $(\ad x)^s=0$ for all $x\in \li$ is called the nilindex of $\li$. An estimate of the dimension of $nil(\li)$ is $\dim nil(\li)\geq \frac{1}{2}\dim \li$, and a finer estimate is given in \cite{Sn}. A good reference on nilpotent and nilpotent Lie algebras is \cite{GK}.

 In this section, we study how the {\cp} may reveal the structure of a nilpotent or solvable Lie algebra. In particular, we will look at the classification problem for solvable Lie algebras from the point of view of their {\cp}s. Recall here the Hu-Zhang theorem that a finite dimensional Lie algebra is solvable if and only if its {\cp} with respect to any linear representation is a product of linear factors. The following lemma is simple but quite useful.

\begin{lemma}\label{lem:linear}
Let $A_1,A_2,\cdots, A_n$ be $k\times k$ matrices and suppose their {\cp} $\det (z_0I+\sum_jz_jA_j)$ has a linear factor $z_0+\lambda_1z_1+\cdots +\lambda_nz_n$. Then $\lambda_j$ is an eigenvalue of $A_j$ for each $j$.
\end{lemma}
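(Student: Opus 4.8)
The plan is to exploit the defining property of a factor: if the linear form $\ell(z) = z_0 + \lambda_1 z_1 + \cdots + \lambda_n z_n$ divides $Q(z) := \det(z_0 I + \sum_j z_j A_j)$, then $Q$ must vanish on the entire hyperplane $\{\ell = 0\}$. Rather than work with the whole hyperplane, I would isolate one index $j$ at a time by evaluating $Q$ at a single, carefully chosen point of this hyperplane, thereby reducing the $n$-matrix problem to an ordinary characteristic-polynomial computation for $A_j$ alone.

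Concretely, fix $j$ and consider the point $z$ with $z_0 = -\lambda_j$, with $z_j = 1$, and with $z_i = 0$ for all other $i$. First I would check that this point lies on $\{\ell = 0\}$: indeed $\ell(z) = -\lambda_j + \lambda_j \cdot 1 = 0$. Since $\ell \mid Q$, it follows that $Q$ vanishes at this point. On the other hand, all the off-diagonal contributions drop out and the pencil collapses to $z_0 I + z_j A_j = A_j - \lambda_j I$, so
\[ 0 = Q(z) = \det(z_0 I + z_j A_j) = \det(A_j - \lambda_j I). \]
This says precisely that $\lambda_j$ is an eigenvalue of $A_j$, and since $j$ was arbitrary the lemma follows.

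There is essentially no hard step here; the content is entirely in choosing the evaluation point so that only $A_j$ survives while still landing on the zero set of $\ell$. If one prefers an argument that does not invoke pointwise vanishing, an equivalent route is to restrict the factorization $Q = \ell \cdot R$ to the two-variable coordinate subspace $\{z_i = 0 : i \neq 0, j\}$; there $Q$ restricts to $\det(z_0 I + z_j A_j) = \prod_{i=1}^k (z_0 + \mu_i z_j)$, where the $\mu_i$ are the eigenvalues of $A_j$, while $\ell$ restricts to the prime element $z_0 + \lambda_j z_j$ of $\C[z_0, z_j]$. Unique factorization in $\C[z_0, z_j]$ then forces $z_0 + \lambda_j z_j$ to be associate to some $z_0 + \mu_i z_j$, and comparing the coefficient of $z_0$ gives $\lambda_j = \mu_i$. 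The only point requiring care in this variant is that the restricted factor $z_0 + \lambda_j z_j$ stays nonzero and prime, which is clear since its coefficient of $z_0$ equals $1$.
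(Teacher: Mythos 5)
Your proposal is correct and rests on the same idea as the paper's proof: specialize all variables other than $z_0$ and $z_j$ to zero so that only $A_j$ survives. Your primary variant (evaluating at the single point $z_0=-\lambda_j$, $z_j=1$) is a slightly more streamlined version that bypasses the unique-factorization step, while your alternative variant coincides with the paper's argument essentially verbatim.
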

Proof is almost trivial, for example to check $\lambda_1$ is an eigenvalue of $A_1$ one may simply set $z_2=\cdots =z_n=0$. Then by the assumption $\det (z_0I+z_1A_1)$ has a factor $z_0+\lambda_1z_1$ and the conclusion follows.

Now consider a solvable Lie algebra $\li$ with a fixed basis $\{x_1,x_2,\cdots,x_n\}$ and let $T_i=\ad {x_i}$.
Then by Hu-Zhang theorem one may write
 \begin{equation}\label{eq:factors}
Q_{\li}(z)=\prod_{j=1}^{n}\left(z_{0} + \sum_{i=1}^n \lambda_{ij}z_{i}\right).
\end{equation}
In view of Lemma \ref{lem:linear}, one has $\sigma(T_{i})=\Big\{ \lambda_{ij}\,\big \vert \, 1\leq j\leq n \Big\}$ for each $i$. 

\begin{definition}
Let $\li$ be a solvable Lie algebra with basis $S=\{x_1,x_2,\cdots,x_n\}$ and $Q_{\li}(z)$ be its {\cp} as in (\ref{eq:factors}). The spectral matrix for $\li$ with respect to the basis $S$ is defined as 
 $\lambda_\li =(\lambda_{ij})_{n\times n} ,1\leq i,j \leq n$.
\end{definition}
We shall write $\lb_\li$ simply as $\lb$ whenever there is no risk of confusion. Since the $i$-th row of $\lambda$ is the set of eigenvalues of $T_i$, counting multiplicity, the order of the $\lambda$'s rows is in line with the order of $T_1,T_2,\cdots,T_n$. Moreover, the order of factors in the factorization (\ref{eq:factors}) determines the order of $\lambda_{i1},\lambda_{i2},\hdots,\lambda_{in}$ for every $1 \leq i \leq n$. In other words it determines the order of columns in the spectral matrix. Since $z_0$ is always a factor of $Q_\li$ by Lemma \ref{lem:z0}, we take it to be the first factor, e.g., $\lambda_{i1}=0$ for all $1\leq i\leq n$.

\begin{lemma}\label{lem:lb}
Let $\{x_1,x_2,\cdots,x_n\}$ and $\{\hat{x}_1,\hat{x}_2,\cdots,\hat{x}_n\}$ be two bases for a solvable Lie algebra $\li$ and assume $\lb$ and $\hat{\lb}$ are the corresponding spectral matrices.
Then $\lb=B\hat{\lambda}$, where $B=(b_{ij})$ is such that $x_{i}=b_{ij}\hat{x}_{j}$ for $1\leq i, j\leq n$. 
\end{lemma}
\begin{proof}
For simplicity, we let $Q(z)$ and $\hat{Q}(z)$ denote the {\cp}s with respect to the two bases. Then by (\ref{eq:trans}), we have $Q(z)=\hat{Q}(w)={\hat{Q}}(zB)$, where $w_0=z_0$ and  $w_{k}=\sum_{i=1}^n z_{i}b_{ik} , 1 \leq k \leq n$.
In other words, we have
\begin{flalign*}
\prod_{j=1}^n(z_{0} + \sum_{i} \lambda_{ij}z_{i})&=\prod ^{n}_{j=1}(z_{0} + \sum_{k}\hat{\lambda}_{kj}w_{k})\\
&= \prod ^{n}_{j=1}(z_{0} + \sum_{i,k}\hat{\lambda}_{kj}z_{i}b_{ik})
= \prod ^{n}_{j=1}\big(z_{0} + (\sum_{i,k}b_{ik}\hat{\lambda}_{kj})z_{i}\big),
\end{flalign*}
which shows $\lambda_{ij}=b_{ik} \hat{\lambda}_{kj}$,
i.e., $\lambda=B\hat{{\lambda}}$.
\end{proof}
 
In the case $\li$ and $\hat{\li}$ are isomorphic solvable Lie algebras of dimension $n$ with corresponding spectral matrices $\lb$ and $\hat{\lb}$, respectively. Then an argument similar to the ending part of the proof to Lemma \ref{lem:iso} will show that there exsits $B\in GL_n$ such that
$\lb=B\hat{\lambda}$. In particular, this suggests that the rank of $\lb$ is an invariant under the isomorphism of solvable Lie algebras. It is thus interesting to determine this rank by the structure of $\li$. 
\begin{definition}
Let $\li$ be a $n$-dimensional Lie algebra and $1\leq k\leq n$. The elements $f_{1},f_{2},\cdots,f_{k}$ in $\li$ are said to be nil-independent if there exist no scalars $c_{1},c_{2},\hdots, c_{k}$, not all $0$,
 such that $\sum_{i=1}^k c_{i}f_{i} \in nil(\li)$.
\end{definition}
In other words $f_{1},f_{2},\hdots,f_{n}$ are nil-independent if and only if $[f_{i}], 1\leq i \leq k$ are linearly independent in the quotient algebra $\li/nil(\li).$

\begin{proposition}\label{prop:rank}
Given a solvable Lie algebra $\li$ we have \[\rank \lambda=\dim \li/nil(\li).\]
\end{proposition}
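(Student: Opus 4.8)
The plan is to realize $\rank\lambda$ as $n$ minus the dimension of a kernel, and then to identify that kernel with the nilradical using the Engel-theorem description of $nil(\li)$ recalled at the start of this section.

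First I would read off the full spectrum of a general adjoint operator directly from the factorization (\ref{eq:factors}). Fix $x=\sum_{i=1}^n c_i x_i\in\li$ and substitute $z_i=c_i$ for $1\le i\le n$ into $Q_\li(z)$, keeping $z_0$ as a free variable $t$. Since $\sum_i c_i T_i=\ad x$, the pencil collapses to the ordinary characteristic polynomial of $\ad x$, giving
\[
\det(tI+\ad x)=\prod_{j=1}^n\Big(t+\sum_{i=1}^n\lambda_{ij}c_i\Big),
\]
so the eigenvalues of $\ad x$, counted with multiplicity, are exactly the $n$ numbers $\mu_j(x):=\sum_{i=1}^n\lambda_{ij}c_i$. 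Each $\mu_j$ is a linear functional on $\li$ whose values on the basis form the $j$-th column of $\lambda$, since $\mu_j(x_i)=\lambda_{ij}$.

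Next I would introduce the linear map $\Phi:\li\to\C^n$, $\Phi(x)=(\mu_1(x),\dots,\mu_n(x))$, whose matrix with respect to the basis $\{x_1,\dots,x_n\}$ and the standard basis of $\C^n$ is the transpose of $\lambda$; hence $\rank\Phi=\rank\lambda$, and by rank-nullity $\rank\lambda=n-\dim\ker\Phi$. It then remains to show $\ker\Phi=nil(\li)$. But $x\in\ker\Phi$ means $\mu_j(x)=0$ for all $j$, which by the previous step says exactly that every eigenvalue of $\ad x$ vanishes, i.e. $\ad x$ is nilpotent; by the Engel-theorem characterization recalled above, this holds precisely when $x\in nil(\li)$. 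Therefore $\ker\Phi=nil(\li)$ and
\[
\rank\lambda=n-\dim nil(\li)=\dim\li/nil(\li).
\]

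The routine parts are the rank-nullity count and the matrix bookkeeping. The step carrying the real content, and the one I would be most careful about, is the eigenvalue identification: I must ensure that (\ref{eq:factors}) delivers \emph{all} $n$ eigenvalues of $\ad x$ with correct multiplicities, rather than merely that each $\mu_j(x)\in\sigma(\ad x)$, which is all Lemma \ref{lem:linear} gives. This is exactly what the single substitution $z_i=c_i$, $z_0=t$ supplies, since it turns the entire pencil into the one-variable characteristic polynomial of $\ad x$ at once, making ``$\mu_j(x)=0$ for all $j$'' genuinely equivalent to nilpotency of $\ad x$. I would also flag that solvability is used twice: once to guarantee the linear factorization (\ref{eq:factors}) via the Hu-Zhang theorem, and implicitly through Engel's theorem to know that the set of elements with nilpotent adjoint action is precisely the ideal $nil(\li)$, so that $\ker\Phi$ is correctly identified with a subspace of the expected dimension.
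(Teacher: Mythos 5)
Your proof is correct, but it takes a genuinely different route from the paper. The paper first chooses a basis adapted to the nilradical (the first $k=\dim nil(\li)$ vectors spanning $nil(\li)$, the rest projecting to a basis of $\li/nil(\li)$), observes that the first $k$ rows of $\lambda$ vanish so $\rank\lambda\le n-k$, and then rules out $\rank\lambda<n-k$ by contradiction: row-reducing $\lambda$ by an invertible $A$ corresponds, via Lemma \ref{lem:lb}, to a new basis with more than $k$ ad-nilpotent (hence nil-dependent) elements. You instead realize $\rank\lambda$ as the rank of the linear map $\Phi(x)=(\mu_1(x),\dots,\mu_n(x))$ built from the column functionals of $\lambda$, and identify $\ker\Phi$ with $nil(\li)$ by one rank--nullity count. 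The step you rightly flag as carrying the content --- that substituting $z_0=t$, $z_i=c_i$ collapses the pencil to $\det(tI+\ad x)=\prod_j\bigl(t+\mu_j(x)\bigr)$, so the $\mu_j(x)$ are \emph{all} eigenvalues of $\ad x$ with multiplicity --- is exactly what upgrades Lemma \ref{lem:linear} enough to make ``$\Phi(x)=0$ iff $\ad x$ is nilpotent'' valid; the paper only exploits this for basis elements and compensates with the change-of-basis Lemma \ref{lem:lb}. Your route avoids both the adapted basis and Lemma \ref{lem:lb}, and it makes explicit the conceptually useful fact that the columns of $\lambda$ are the weights of the adjoint representation as linear functionals on $\li$; the paper's route stays closer to the spectral-matrix transformation machinery it has already set up. Both arguments lean on the same two external inputs: the Hu--Zhang factorization (\ref{eq:factors}) and the Engel-theorem description of $nil(\li)$ as the set of ad-nilpotent elements, which is valid for solvable $\li$ as the paper asserts at the start of Section 4.
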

\begin{proof}
Assume that $\li$ has dimension $n$ and its nilradical $nil(\li)$ has a basis $\Big\{x_{1},x_{2},\hdots,x_{k}\Big\}$, where $k\leq n$. Assume further that $x_{k+1},x_{k+2},\hdots,x_{n}$ are elements in $\li$ such that $\{[x_{k+1}],[x_{k+2}],\hdots,[x_{n}]\}$ is a basis for the quotient  $\li/nil(\li)$. Then clearly $\{x_1, ..., x_n\}$ is a basis for $\li$ and $T_i=\ad {x_i}, 1\leq i\leq k$ are nilpotent matrices by Engel's theorem. This means that the first $k$ rows in the spectral matrix $\lb$ are $0$s and hence $\rank \lb\leq n-k$. Suppose $t=\rank \lambda< n-k$. Then there exists an invertible $n \times n$ matrix $A=(a_{ij})$ such that $A \lambda$ has precisely $t$ nonzero rows. If we set $\hat x_{i}=\sum_{j=1}^na_{ij}x_{j} , 1\leq i \leq n$, then $\hat \lambda=A\lambda$ by Lemma \ref{lem:lb}, and it follows that the set $\Big\{ \hat T_{i}=\ad\hat x_{i}\,\Big \vert \,1 \leq i \leq n\Big\}$ contains only $t$ non-nilpotent matrices, or equivalently, the basis $\{\hat x_{i}: 1\leq i\leq n\}$ has $t$ nilpoten elements. This  contradicts with the fact that $x_{k+1}, x_{k+2},\hdots, x_{n}$ are nil-independent.
\end{proof}

It was shown in \cite{Sn} that for a solvable Lie algebra $\li$ one has 
\[\dim \li/nil(\li)\leq \dim nil(\li)-\dim [nil(\li), nil(\li)].\] 
Adding $\dim \li/nil(\li)$ to both sides above one sees in particular that \[2\dim \li/nil(\li)\leq \dim \li.\]
Thus the next corollary follows.
\begin{corollary}
Given a solvable Lie algebra $\li$ we have \[\rank \lambda\le \dim nil(\li)-\dim [nil(\li), nil(\li)].\]
In particular, one has $\rank  \lambda\le \frac{\dim \li}{2}.$
\end{corollary}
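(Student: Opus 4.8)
The plan is simply to chain together the two facts already established just above the statement. By Proposition \ref{prop:rank} the quantity $\rank \lambda$ coincides with $\dim \li/nil(\li)$, so the first displayed inequality of the corollary is nothing more than a restatement of the bound
\[
\dim \li/nil(\li)\le \dim nil(\li)-\dim [nil(\li), nil(\li)]
\]
cited from \cite{Sn}: I would substitute $\rank\lambda$ for the left-hand side and be done. No additional argument is needed for this part.

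For the ``in particular'' clause I would invoke the elementary identity $\dim \li/nil(\li)+\dim nil(\li)=\dim \li$, which holds because $nil(\li)$ is a subspace of $\li$ and its quotient carries the complementary dimension. Adding $\dim \li/nil(\li)$ to both sides of the \cite{Sn} inequality then yields
\[
2\dim \li/nil(\li)\le \dim \li/nil(\li)+\dim nil(\li)-\dim [nil(\li), nil(\li)]=\dim \li-\dim [nil(\li), nil(\li)].
\]
Since $\dim [nil(\li), nil(\li)]\ge 0$, the right-hand side is at most $\dim \li$; dividing by $2$ and again applying Proposition \ref{prop:rank} gives $\rank \lambda=\dim \li/nil(\li)\le \tfrac{\dim \li}{2}$, as claimed.

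Because both ingredients are already in place, there is no genuine analytic or structural obstacle here; the corollary is a bookkeeping consequence of Proposition \ref{prop:rank} together with \cite{Sn}. The only point that merits a moment's attention is the arithmetic of the second claim: one must remember that $[nil(\li), nil(\li)]$ can be trivial (for instance when $nil(\li)$ is abelian), so that the first inequality may be strictly weaker than the symmetric bound $\tfrac{\dim \li}{2}$; nevertheless the latter still follows, since discarding the nonnegative term $\dim [nil(\li), nil(\li)]$ only weakens the estimate in the desired direction.
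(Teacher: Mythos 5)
Your argument is correct and is essentially identical to the paper's: the paper likewise combines Proposition \ref{prop:rank} with the inequality $\dim \li/nil(\li)\le \dim nil(\li)-\dim [nil(\li),nil(\li)]$ from \cite{Sn}, and obtains the second bound by adding $\dim \li/nil(\li)$ to both sides and discarding the nonnegative term $\dim[nil(\li),nil(\li)]$. No gaps.
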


Clearly, for a solvable Lie algebra $\li$, its {\cp} $Q(z)=z_{0}^n$ with respect to some basis if and only if the spectral matrix $\lb=0$ with respect to the same basis. In view of Proposition \ref{prop:rank} this is so if and only if $\li=nil(\li)$. Hence the following fact is immediate.
\begin{corollary}\label{cor:nil}
A $n$-dimensional Lie algebra $\li$ is nilpotent if and only if its {\cp} $Q(z)=z_{0}^n$.
\end{corollary}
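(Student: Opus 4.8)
The plan is to prove Corollary \ref{cor:nil} by leveraging Corollary \ref{cor:nil}'s two direct predecessors: the Hu-Zhang solvability criterion and Proposition \ref{prop:rank}. The statement to establish is that an $n$-dimensional Lie algebra $\li$ is nilpotent if and only if $Q_\li(z)=z_0^n$. The cleanest route is to observe that the condition $Q_\li(z)=z_0^n$ forces $\li$ to be solvable (indeed nilpotent) for free, so both directions reduce to statements about the spectral matrix, whose rank is controlled by Proposition \ref{prop:rank}.

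First I would dispose of the forward direction. Suppose $\li$ is nilpotent. Then $\li$ is solvable, so by the Hu-Zhang theorem $Q_\li(z)$ factors into linear forms as in (\ref{eq:factors}), and the spectral matrix $\lb$ is defined. By Engel's theorem every $\ad x_i=T_i$ is a nilpotent matrix, so $\sigma(T_i)=\{0\}$ for each $i$, which means every entry $\lambda_{ij}=0$, i.e. $\lb=0$. Plugging $\lb=0$ into (\ref{eq:factors}) gives $Q_\li(z)=\prod_{j=1}^n z_0 = z_0^n$.

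For the converse, suppose $Q_\li(z)=z_0^n$. I would first note that $z_0^n$ is manifestly a product of $n$ linear factors, so by the Hu-Zhang theorem $\li$ is solvable; hence its spectral matrix $\lb$ is well-defined and the machinery of Section 4 applies. Comparing $Q_\li(z)=z_0^n$ with the factorization (\ref{eq:factors}), every factor must be $z_0$, forcing $\lambda_{ij}=0$ for all $i,j$, i.e. $\lb=0$ and so $\rank\lb=0$. By Proposition \ref{prop:rank} we then have $\dim\li/nil(\li)=\rank\lb=0$, which gives $\li=nil(\li)$, i.e. $\li$ is nilpotent. This is essentially the chain of equivalences flagged in the paragraph immediately preceding the corollary (``$Q(z)=z_0^n$ $\iff$ $\lb=0$ $\iff$ $\li=nil(\li)$''), so the argument is really just assembling those observations.

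The only point requiring care—and the one I would treat as the main (mild) obstacle—is the logical dependency in the converse: the spectral matrix $\lb$ and Proposition \ref{prop:rank} are only available once $\li$ is known to be solvable. So it is important to invoke Hu-Zhang \emph{first} to upgrade the hypothesis $Q_\li(z)=z_0^n$ to solvability before speaking of $\lb$ at all; otherwise the reference to Proposition \ref{prop:rank} would be unjustified. Once solvability is secured, the rank computation is immediate and no genuine difficulty remains. An alternative that sidesteps Proposition \ref{prop:rank} entirely would be to argue directly from $\lb=0$: since the $i$-th row of $\lb$ is $\sigma(T_i)$, the vanishing $\lb=0$ says $\sigma(\ad x_i)=\{0\}$ for each basis element, hence $\sigma(\ad x)=\{0\}$ for every $x\in\li$ after checking this on general $x$, so each $\ad x$ is nilpotent and Engel's theorem yields nilpotency of $\li$; but the Proposition \ref{prop:rank} route is shorter and is the one the surrounding text sets up, so I would present that.
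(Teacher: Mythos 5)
Your proof is correct and follows essentially the same route as the paper, which obtains the corollary from the chain $Q(z)=z_0^n \iff \lb=0 \iff \li=nil(\li)$ via Proposition \ref{prop:rank}. The extra care you take in the converse direction---invoking the Hu--Zhang theorem to secure solvability before the spectral matrix and Proposition \ref{prop:rank} may be used---addresses a step the paper's one-line derivation glosses over, but it is the same underlying argument.
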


\section{Spectral invariants}

By Levi decomposition theorem every Lie algebra is a semidirect sum of a semisimple Lie subalgebra with a solvable ideal. Since complex simple Lie algebras are classified by Dynkin diagram, the classification problem for Lie algebras rested on a classification of solvable ones. But this seems to be quite challenging even at low dimensions, and we refer the readers to \cite{GK,Gr,Mu,Mu2,PSW,PZ} for more information on this effort.
One fruitful alternative is to classify solvable Lie algebras with the
same nilradical, or in other words to classify different extensions of a given nilpotent Lie algebra. Details on this approach can be found in \cite{Sn,SW1,SW2} and the references therein. In this section we give examples to show how {\cp} and spectral matrix may assist in the classification scheme.

If $\li$ is nilpotent then its {\cp} is a power of $z_0$ by Corollary \ref{cor:nil} and the spectral matrix is $0$. This fact provides convenience for the study of solvable extensions of nilpotent algebras.
Proposition \ref{prop:rank} indicates that for a solvable Lie algebra $\li$ the rank of its spectral matrix 
is invariant with respect to change of basis and isomorphism of Lie algebras. It is quite tempting to ask if there may be other invariants linked with characteristic polynomial and the spectral matrix. A close observation of Lemma \ref{lem:iso} and factorization (\ref{eq:factors}) suggestes that the number of distinct factors, which we denote by $k(\li)$, in the {\cp} $Q_\li(z)$ is invariant with respect to change of basis and isomorphism. Given a finite set $S$ of complex numbers, we let $|S|$ denote the cardinality of $S$. 
\begin{proposition}\label{prop:kl}
Let $\li$ be a $n$-dimensional solvable Lie algebra with basis $\{x_1,x_2,\cdots,x_n\}$. Then
$k(\li)\geq \max \{ \Big \vert \,\sigma(\ad x_{i}) \, \Big\vert \,i=1,2,\hdots,n \}$. 
\end{proposition}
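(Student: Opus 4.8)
The plan is to recast both quantities in the statement as combinatorial features of the spectral matrix $\lambda=(\lambda_{ij})$ and then apply a trivial cardinality bound. The whole argument lives inside the factorization (\ref{eq:factors}), so no further structure theory is needed.

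First I would note that by (\ref{eq:factors}) each linear factor of $Q_\li(z)$ has the normalized form $z_0+\sum_{i=1}^n \lambda_{ij}z_i$, so it is completely determined by the $j$-th column $(\lambda_{1j},\dots,\lambda_{nj})^t$ of $\lambda$; two such factors agree precisely when the corresponding columns of $\lambda$ coincide as vectors in $\C^n$. Consequently $k(\li)$, the number of distinct factors, equals the number of distinct columns of $\lambda$. On the other hand, as recorded after the definition of the spectral matrix, the $i$-th row of $\lambda$ lists the eigenvalues of $\ad x_i$ counted with multiplicity, so $\sigma(\ad x_i)=\{\lambda_{i1},\dots,\lambda_{in}\}$ as a set, and hence $|\sigma(\ad x_i)|$ is exactly the number of distinct entries appearing in the $i$-th row.

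With this dictionary in place the proposition reduces to an elementary claim: in any matrix the number of distinct columns is at least the number of distinct entries occurring in a single fixed row. To prove it I would fix $i$ and let $\pi_i\colon \C^n\to\C$ denote the projection onto the $i$-th coordinate. Applying $\pi_i$ to the columns of $\lambda$ returns precisely the entries of row $i$, so $\sigma(\ad x_i)$ is the image under $\pi_i$ of the set $C$ of columns. Since a map never increases cardinality, the number of distinct values in row $i$ is at most the number of distinct columns, that is, $|\sigma(\ad x_i)|=|\pi_i(C)|\le |C|=k(\li)$. Taking the maximum over $i=1,\dots,n$ yields the asserted inequality.

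I do not expect a genuine obstacle: the heart of the argument is the cardinality inequality $|\pi_i(C)|\le |C|$, which is immediate. The only point demanding care is the bookkeeping of the first step, namely that distinct normalized linear factors are in bijection with distinct columns of $\lambda$; but this is clear because a linear form in $z_0,\dots,z_n$ with coefficient $1$ on $z_0$ is determined by its remaining $n$ coefficients, and the product in (\ref{eq:factors}) already fixes this normalization (with $z_0$ itself corresponding to the zero column, consistent with $0\in\sigma(\ad x_i)$ via Lemma \ref{lem:z0}).
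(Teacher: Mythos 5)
Your proof is correct and is essentially the paper's argument in slightly more abstract clothing: the paper fixes the row $m$ achieving the maximum and observes that factors whose coefficients of $z_m$ differ must be distinct, which is exactly your projection-onto-the-$i$-th-coordinate cardinality bound applied to that row. The column/row dictionary you set up is sound, so no further comment is needed.
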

\begin{proof}
Given the factorization
\[Q_{\li}(z)=\prod_{j=1}^{n}\left(z_{0} +\sum _{i=1}^n \lambda_{ij}z_{i}\right),\]
we have $\lambda=(\lambda_{ij})_{n \times n},$ and $\sigma(\ad x_{i})=\{ \lambda_{ij} \Big \vert\,\,j=1,2,\hdots,n \}.$ We set 
\[t=\max \{ \Big \vert \,\sigma(\ad x_{i}) \, \Big\vert \,i=1,2,\hdots,n \}\]
and assume $t=\Big \vert \,\sigma(\ad x_{m}) \, \Big\vert$ for some $1\leq m\leq n$. Further, we assume
$\lb_{mj_1}, ..., \lb_{mj_t}$ are the distinct eigenvalues in $\sigma(\ad x_m)$. Then the factors 
\[z_0+\lb_{1j}z_1+\cdots + \lb_{mj}z_m+\cdots +\lb_{nj}z_n\] in the factorization above are distinct for $j=j_1, ..., j_t$ because the coefficients of the variable $z_m$ are distinct. This imply that $Q_\li$ has at least $t$ distinct factors, i.e., we have $k(\li)\geq t$.
\end{proof}
The fact that $k(\li)$ is an invariant for $\li$ is not so obvious.	It is a tempting question whether one may determine $k(\li)$ by other known invariants of $\li$. The next example indicates that this might not be easy.

\begin{exmp}\label{ex:lab}
Let $\li_{a,b}=span \Big\{x_{1},x_{2},x_{3} \Big\}$ be a complex Lie algebra with non-zero structure constants specified by the brackets:
$[x_{3},x_{1}]=x_{2},\,\,[x_{3},x_{2}]=ax_{1}+bx_{2}$, where $b\neq 0$. Then\\
\[T_{1}=
\left(\begin{matrix}
0 & 0 & 0  \\
0 & 0 & -1  \\
0 & 0 & 0 
\end{matrix}\right),\,\,
T_{2}=
\left(\begin{matrix}
0 & 0 & -a  \\
0 & 0 & -b  \\
0 & 0 & 0 
\end{matrix}\right),\,\,
T_{3}=
\left(\begin{matrix}
0 & a & 0  \\
1 & b & 0  \\
0 & 0 & 0 
\end{matrix}\right),\] and
\[Q_{\li_{a,b}}(z)=z_{0}(z_{0}+\lb_{32}z_{3})(z_{0}+\lb_{33}z_{3}),\] where \[\lb_{32}=\frac{-b+\sqrt{b^2+4a}}{2}, \ \lb_{33}=\frac{-b-\sqrt{b^2+4a}}{2}.\]
Therefore, $\sigma(T_{1})=\Big\{0\Big\}=\sigma(T_{2})$ and 
$\sigma(T_{3})=\{0 ,\lambda_{32}, \lambda_{33}\}$. Hence the spectral matrix 
\[\lambda=\left(\begin{matrix}
0 & 0 & 0 \\
0 & 0 & 0 \\
0 & \lambda_{32} & \lambda_{33} 
\end{matrix}\right),\]
 and $k(\li_{a,b})=2$ when $b^2+4a= 0$ and $k(\li_{a,b})=3$ when $b^2+4a\neq 0$.

Suppose $\li_{a,b}$ is isomorphic to $\li_{a',b'}=span\{x_1', x_2', x_3'\}$ then by Lemma \ref{lem:lb} and the comments after there exists a matrix $A=B^{-1}\in GL_3$, such that 
\[\left(\begin{matrix}
0 & 0 & 0 \\
0 & 0 & 0 \\
0 &  {\lambda'_{32}} &  {\lambda'_{33}}
\end{matrix}\right)=\lambda'=A\lambda=\left(\begin{matrix}
0 & a_{13}\lambda_{32} & a_{13}\lambda_{33} \\
0 & a_{23}\lambda_{32} & a_{23}\lambda_{33} \\
0 & a_{33}\lambda_{32} &  a_{33}\lambda_{33}
\end{matrix}\right).\]
It follows that ${\lambda'_{32}}=a_{33}{\lambda_{32}} ,\,\, {\lambda'_{33}}=a_{33}{\lambda_{33}}$ and hence $\frac{{\lambda'_{32}}}{{\lambda'_{33}}}=\frac{{\lambda_{32}}}{{\lambda_{33}}}$, i.e., 
\[\frac{-b' + \sqrt{b'+4a'}}{-b' - \sqrt{{b'}^2+4a}}=\frac{-b + \sqrt{b^{2}+4a}}{-b - \sqrt{b^{2}+4a}}=k,\,\, k \in \mathbb{C}.\]
This implies 
\[\frac{b^2 +4a}{b^2}=\frac{(k-1)^2}{(k+1)^2}=\frac{{b'}^2 +4a'}{{b'}^2},\]
and consequently $\frac{a}{b^2}=\frac{a'}{{b'}^2}$ as obtained in \cite{Gr} through Gr\"{o}bner basis computation.
\end{exmp}

Example \ref{ex:lab} indicates that $k(\li_{a,b})$ subtly depends on the parameters $a$ and $b$, and hence it seems to be finer than known algebraic invariants about $\li_{a,b}$.

Interestingly, Theorem \ref{thm:aut} can help to determine the automorphism group $\aut(\li_{a,b})$ as well. If $\phi \in \aut (\li_{a,b})$ with matrix representaion $A=(a_{ij})\in GL_3$, then since the {\cp} $Q_{\li_{a,b}}$ is invariant under change of variables by $A$, in view of the factors of $Q_{\li_{a,b}}(z)$ we see that the variable $z_3$ must be fixed by this change of variables. A direct computation shows $a_{13}=a_{23}=0$ and $a_{33}=1$. Checking on the relations $[\phi (x_i), \phi(x_j)]=\phi([x_i, x_j])$ we obtain
\[a_{21}=aa_{12},\ \ a_{22}=a_{11}+ba_{12},\]
and therefore $\aut(\li_{a,b})$ is equal to the set of complex matrices
\[\left(\begin{matrix}
s & t & 0  \\
at & s+bt & 0  \\
u & v & 1 
\end{matrix}\right),\ \ s^2+bst-{a}t^2\neq 0.\]
It is not hard to see that if the above matrix is unitary then $u=v=0$ and $|a|=1$. Hence $\aut(\li_{a,b})$
contains no unitary elements if $|a|\neq 1$. Example \ref{ex:lab} gives rise to the following generalization.
\begin{corollary}\label{cor:ex}
Let $\mathcal N$ be a $(n-1)$-dimensional nilpotent Lie algebra, where $n\geq 1$, and assume 
$\li=\C x_n\niplus \mathcal N$ and $\li'=\C x'_n\niplus \mathcal N$ are two $1$-dimensional solvable extensions of $\mathcal N$. If $\li$ and $\li'$ are isomorphic then there exists a scalar $t\neq 0$ such that $\sigma(T_n)=t \sigma(T'_n)$.
\end{corollary}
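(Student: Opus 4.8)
Both $\li$ and $\li'$ are $1$-dimensional solvable extensions of the same $(n-1)$-dimensional nilpotent algebra $\mathcal N$, so I have bases $\{x_n, y_1, \dots, y_{n-1}\}$ and $\{x_n', y_1, \dots, y_{n-1}\}$ respectively, where $\{y_1, \dots, y_{n-1}\}$ is a fixed basis of the common ideal $\mathcal N$. Since $\mathcal N$ is nilpotent, each $\ad y_i$ is nilpotent by Engel's theorem, so $\sigma(\ad y_i) = \{0\}$; the only generator with possibly nonzero eigenvalues is $x_n$ (resp.\ $x_n'$). The plan is to extract this fact into the spectral matrix and then apply Lemma~\ref{lem:lb}.

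**Spectral matrices.** By Corollary~\ref{cor:nil} and the semidirect-sum formula \ref{eq:prod} (with Remark~\ref{rm:prod}), $\li$ is solvable, so $Q_\li$ factors into linear factors as in \ref{eq:factors}, and its spectral matrix $\lb$ has all rows zero except possibly the row indexed by $x_n$, whose entries are exactly the eigenvalues of $\ad x_n$ counted with multiplicity; similarly $\lb'$ for $\li'$. Thus the only nonzero data in $\lb$ (resp.\ $\lb'$) is the single row $\sigma(T_n)$ (resp.\ $\sigma(T_n')$).

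**Applying the isomorphism.** Since $\li \cong \li'$, the remark following Lemma~\ref{lem:lb} gives a matrix $A \in GL_n$ with $\lb' = A\lb$. First I would argue that, after arranging the bases so $x_n$ and $x_n'$ are the last basis vectors and $\mathcal N$ occupies the first $n-1$ positions, $A$ must carry the one nonzero row of $\lb$ to the one nonzero row of $\lb'$: because $A$ is invertible and $\lb$ has exactly one nonzero row (row $n$), the matrix $A\lb$ has each of its rows equal to a scalar multiple $a_{in}$ of that single row. For $\lb' = A\lb$ to itself have its only nonzero row in position $n$, I need $a_{in} = 0$ for $i < n$ and $a_{nn} \ne 0$; this is where I must use that $A$ descends from an isomorphism preserving the structure, not merely any linear map equalizing the matrices. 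Concretely, the isomorphism sends $\mathcal N = nil(\li)$ onto $\mathcal N = nil(\li')$ (the nilradical is characteristic), forcing the block structure of $A$ and hence $a_{in}=0$ for $i<n$. Writing $t = a_{nn} \ne 0$, the identity of the $n$-th rows reads $\sigma(T_n') = t\,\sigma(T_n)$ as sets of eigenvalues (row vectors up to the column ordering fixed by the factorization). Rearranging to match the statement gives $\sigma(T_n) = t^{-1}\sigma(T_n')$, i.e.\ the claimed scalar is $t^{-1} \ne 0$.

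**Main obstacle.** The routine part is the spectral-matrix bookkeeping; the genuinely delicate step is justifying that the transition matrix $A$ respects the splitting $\li = \C x_n \niplus \mathcal N$, i.e.\ that an \emph{abstract} isomorphism—which a priori may mix $x_n$ into $\mathcal N$ and vice versa—can be normalized so that its matrix has the block-triangular form making $a_{in} = 0$ for $i < n$. This rests on the nilradical being an isomorphism-invariant ideal, so that $\phi(\mathcal N) = \mathcal N$; once that is in hand the eigenvalue rows are forced to be scalar multiples and the conclusion is immediate. I would state this invariance explicitly and note that the scalar $t$ is exactly the induced action of $\phi$ on the $1$-dimensional quotient $\li/nil(\li)$.
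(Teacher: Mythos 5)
Your proposal is correct and follows essentially the same route as the paper: write bases with $\mathcal N$ in the first $n-1$ slots and the extending element last, note that both spectral matrices have only their bottom rows nonzero, and apply Lemma~\ref{lem:lb} (via the remark following it) to get $\lambda = B\lambda'$, whose $n$-th row immediately yields $\sigma(T_n) = b_{nn}\,\sigma(T'_n)$. The one step you flag as ``genuinely delicate''---forcing $a_{in}=0$ for $i<n$ via invariance of the nilradical---is in fact unnecessary: since only row $n$ of the spectral matrix is nonzero, the $n$-th row of the product is automatically $a_{nn}$ times it, so comparing bottom rows alone suffices (and $a_{nn}\neq 0$ follows from invertibility unless both spectral matrices vanish, in which case the claim is trivial).
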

\begin{proof}
Let $S=\{x_1,x_2,\cdots,x_{n-1}\}$ be a basis for $\mathcal N$. Then $S\cup \{ x_n\}$ and $S\cup \{x'_n\}$ are bases for $\li$ and $\li'$, respectively. Their corresponding spectral matrices
$\lb$ and $\lb'$ have nonzero entries only at the bottom row. Suppose the bottom row of $\lb$ is 
$(0, \mu_1, ..., \mu_{n-1})$ and that for $\li'$ is $(0, \mu'_1, ..., \mu'_{n-1})$. If $\phi: \li\to\li'$ is an isomorphism, then Lemma \ref{lem:lb} implies $\lb=B\lb'$ for some $B=(b_{ij})\in GL_n$. Therefore, we have $\mu_j=b_{nn} \mu'_j,\ 1\leq j\leq n-1$, which completes the proof with $t=b_{nn}$.
\end{proof}

\begin{rmk} Corollary \ref{cor:ex} suggests that, up to a nonzero scalar multiple, the spectrum $\sigma(T_n)$ is an invariant for the isomorphism classes of $1$-dimensional extensions of a $(n-1)$-dimensional nilpotent Lie algebra.
\end{rmk}
We use an example to show the usefulness of Corollary \ref{cor:ex}.
\begin{exmp}\label{ex:ab}
Consider solvable Lie algebra $A_{a,b}=span\Big\{x_{1},x_{2},x_{3},x_{4} \Big \}$, with brackets $[x_{1},x_{4}]=ax_{1}$, $[x_{2},x_{4}]=bx_{2}-x_{3}$, $[x_{3},x_{4}]=x_{2}+bx_{3}$, $a > 0$. This algebra is of type $L_{4,6}$ according to \cite{Mu}, and it is a $1$-dimensional extension of the $3$-dimensional abelian Lie algebras generated by $\{x_1, x_2, x_3\}$. One verifies easily that
\[z_0I+\sum_jz_jT_j=\left(\begin{matrix}
z_0-az_4 & 0 & 0 & az_1\\
0 & z_0-bz_4 & -z_4 & bz_2+z_3\\
0 & z_4 & z_0-bz_4 & -z_2+bz_3\\
0 & 0 & 0 & z_0
\end{matrix}\right),\]
and hence the {\cp} \[Q(z)=z_{0}(z_{0}-az_{4})(z_{0}-(b+i)z_{4})(z_{0}-(b-i)z_{4}),\]
which indicates that $\sigma(T_4)=\{0, -a, -b-i, -b+i\}$. If $A_{a',b'}$ is isomorphic to $A_{a,b}$ then by Corollary \ref{cor:ex} there exists a nonzero scalar $k$ such that
\[ka'=a,\ k(b'+i)=b+i,\ k(-b'+i)=-b+i.\]
Adding the last two equations one obtains $k=1$ and hence $a=a', b=b'$. This shows that $A_{a',b'}$ is isomorphic to $A_{a,b}$ only if $a=a', b=b'$.
\end{exmp}

We end this section by a result about unitary elements in the automorphism group. It is a generalization of the observation made before Corollary \ref{cor:ex}.

\begin{corollary}
Suppose $\li$ is a solvable Lie algebra of dimension $n\geq 1$ such that $\dim nil(\li)=n-1$. Then \[\aut(\li)\cap U(n)= \aut(nil(\li))\cap U(n-1)\oplus 1.\]
\end{corollary}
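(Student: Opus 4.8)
The plan is to choose a basis adapted to the nilradical, reduce the matrix of a unitary automorphism to block-diagonal form, and then identify the two diagonal blocks.

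First I would fix a basis $\{x_1,\dots,x_{n-1}\}$ of $\mathcal N:=nil(\li)$ and extend it by a single vector $x_n$ to a basis of $\li$; this is possible precisely because $\dim \li/\mathcal N=1$. By Engel's theorem $\mathcal N$ is exactly the set of $x\in\li$ for which $\ad x$ is nilpotent, a property preserved by any automorphism, so every $\phi\in\aut(\li)$ satisfies $\phi(\mathcal N)=\mathcal N$. Consequently $\phi(x_i)\in\mathcal N$ for $i\le n-1$, which forces the top-right entries of the matrix $B_\phi=(b_{ij})$ to vanish, i.e. $B_\phi=\left(\begin{smallmatrix}A&0\\ r&d\end{smallmatrix}\right)$ is block lower-triangular, with $A$ the $(n-1)\times(n-1)$ matrix of $\phi|_{\mathcal N}$.

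Next I would impose unitarity. The columns of $B_\phi\in U(n)$ are orthonormal; the last column is $(0,\dots,0,d)^t$ with $|d|=1$, and its orthogonality to the first $n-1$ columns forces the last-row block $r$ to vanish. Hence $B_\phi=\left(\begin{smallmatrix}A&0\\0&d\end{smallmatrix}\right)$ with $A\in U(n-1)$ and $|d|=1$. Since $\phi$ preserves the brackets among $x_1,\dots,x_{n-1}$ and $\mathcal N$ is a subalgebra, the restriction $\phi|_{\mathcal N}$ is a unitary automorphism of $\mathcal N$, so $A\in\aut(\mathcal N)\cap U(n-1)$. Thus the inclusion $\subseteq$ follows once we show $d=1$.

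Showing $d=1$ is the step I expect to be the main obstacle. By Engel's theorem $T_i=\ad x_i$ is nilpotent for $i\le n-1$, so the first $n-1$ rows of the spectral matrix vanish and the factorization (\ref{eq:factors}) collapses to $Q_\li(z)=\prod_{j=1}^n(z_0+\lambda_{nj}z_n)$, a polynomial in $z_0,z_n$ alone. The substitution $z\mapsto zB_\phi$ sends $z_n\mapsto d\,z_n$, so the invariance $Q_\li(z)=Q_\li(zB_\phi)$ from Theorem \ref{thm:aut} says that the multiset $\{\lambda_{nj}\}_j=\sigma(\ad x_n)$ is invariant under multiplication by $d$. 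To pin down $d=1$ I would invoke the remark that $(\operatorname{tr}T_1,\dots,\operatorname{tr}T_n)^t$ is an eigenvector of $B_\phi$: since $T_1,\dots,T_{n-1}$ are traceless this vector equals $(0,\dots,0,\operatorname{tr}T_n)^t$, and when it is nonzero the eigenvalue is $1$; comparing with $B_\phi e_n=d\,e_n$ gives $d=1$. The delicate point is exactly the nonvanishing of $\operatorname{tr}T_n=\sum_j\lambda_{nj}$; when $\sigma(\ad x_n)$ happens to be invariant under a unimodular scaling $d\ne1$, one must instead exploit the full automorphism relation $A\,(\ad x_n|_{\mathcal N})=d\,(\ad x_n|_{\mathcal N})\,A$ to exclude such $d$, and this is where the real content of the statement lies.

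Finally, for the reverse inclusion I would take $A\in\aut(\mathcal N)\cap U(n-1)$ and set $\phi=A\oplus1$. This map is unitary and restricts to an automorphism of $\mathcal N$, so the only relations left to verify are $\phi([x_n,x_i])=[\phi(x_n),\phi(x_i)]$ for $i\le n-1$; these reduce to the commutation $A\,(\ad x_n|_{\mathcal N})=(\ad x_n|_{\mathcal N})\,A$, i.e. that conjugation by $A$ fixes the extension derivation. Establishing this compatibility is what promotes $A\oplus1$ to a genuine element of $\aut(\li)\cap U(n)$ and completes the identification.
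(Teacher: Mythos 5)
Your argument follows the same skeleton as the paper's proof: the same adapted basis $\{x_1,\dots,x_{n-1},x_n\}$ with $x_i\in nil(\li)$ for $i<n$, the same block--triangular shape for $B_\phi$, and the same use of unitarity to kill the remaining off-diagonal block. (The paper derives the vanishing of $b_{in}$, $i<n$, from the invariance $Q(z)=Q(zB_\phi)$ applied to $Q(z)=\prod_{j}(z_0+\mu_jz_n)$ rather than from $\phi(nil(\li))=nil(\li)$; that is a cosmetic difference.) The genuine gap is the one you flagged yourself and then left open: you never prove $d=b_{nn}=1$. Your trace argument works only when $\operatorname{tr}T_n=\sum_j\mu_j\neq 0$, and your fallback --- ``exploit the relation $A(\ad x_n|_{\mathcal N})A^{-1}=d\,(\ad x_n|_{\mathcal N})$ to exclude such $d$'' --- is a declaration of intent, not a proof. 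The paper disposes of this step in one line: since $Q(zB_\phi)=\prod_j(z_0+\mu_jb_{nn}z_n)$ must equal $\prod_j(z_0+\mu_jz_n)$, multiplication by $b_{nn}$ permutes the multiset $\{\mu_j\}$, and the paper reads off $b_{nn}=1$. You should know that your suspicion here is substantive rather than a formality: the factorization comparison by itself only yields $b_{nn}^k=1$ where $k$ is the number of nonzero $\mu_j$'s. For instance, the algebra with $[x_3,x_1]=x_1$, $[x_3,x_2]=-x_2$ has $\sigma(T_3)=\{0,1,-1\}$, invariant under scaling by $-1$, and the unitary map $x_1\mapsto x_2$, $x_2\mapsto x_1$, $x_3\mapsto -x_3$ is an automorphism with $b_{33}=-1$. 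So the step you called ``the real content'' cannot be closed by the spectral data alone, and your proposed commutation relation does not in general exclude unimodular $d\neq 1$.

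The reverse inclusion is also incomplete in your write-up: you correctly reduce it to showing that $A\oplus 1$ respects the brackets $[x_n,x_i]$, i.e.\ that $A$ commutes with $\ad x_n|_{\mathcal N}$, but you do not establish this (and it is not automatic for an arbitrary unitary automorphism of $nil(\li)$). For what it is worth, the paper's own proof only addresses the inclusion $\subseteq$ and is silent on $\supseteq$. So as it stands your proposal proves neither containment in full: $\subseteq$ hinges on the unproved $d=1$, and $\supseteq$ hinges on the unproved compatibility with the extension derivation.
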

\begin{proof}
Let $\{x_1,x_2,\cdots,x_{n-1},x_{n}\}$ be a basis of $\li$ such that $x_i\in nil(\li),\ 1\leq i\leq n-1,$ and $x_{n}\notin nil(\li)$. Then the {\cp} of $\li$ is of the form
\[Q(z)=\prod_{j=1}^n(z_0+\mu_jz_n),\]
where $\mu_j, 1\leq j\leq n$ are the eigenvalues of $T_n$ which are not all $0$. If $A=(a_{ij})\in \aut(\li)$, then the fact $Q(z)=Q(zA)$ by Lemma \ref{lem:iso} implies that $a_{in}=0,\ 1\leq i\leq n-1$ and $a_{nn}=1$. If in addition $A$ is a unitary then one also has $a_{ni}=0,\ 1\leq i\leq n-1$, i.e., $A$ is a diagonal block matrix $A'\oplus 1$ for some unitary $A'\in \aut(nil(\li))$. 
\end{proof}

\section{Eigen-variety and Poincar\'{e} polynomial}

Given a Lie algebra $\li$ of dimension $n\geq 2$ with basis $S=\{x_1,x_2,\cdots ,x_n\}$, its {\cp} $Q_{\li}(z)$ is a homogeneous polynomial in $n+1$ variables with degree $n$. Its zero variety $V_\li:=\{z\in \C^{n+1} : Q_\li(z)=0\}$, which we call the eigen-variety of $\li$ with respect to the basis $S$, is a finite dimensional case of the projective spectrum mentioned in the introduction. It is of great interest to see how the algebraic structure of $\li$ is encoded in the eigen-variety $V_\li$. First, since by Lemma \ref{lem:z0} the {\cp} for all finite dimensional Lie algebras contains the factor $z_0$, the hyperplane $\{z_0=0\}$ is present in all such $V_\li$. Hence it makes sense to exclude this hyperplane in the discussion. Assume $Q_\li(z)=z_0^kp(z)$, where $k\geq 1$ and $z_0$ is not a factor of polynomial $p(z)$ (of degree $n-k$). Then we consider the variety $V^*_\li=\{p(z)=0\}$. Formula (\ref{eq:trans}) indicates that if $\hat{S}=\{\hat{x}_1,\hat{x}_2,\cdots,\hat{x}_n\}$ is another basis for $\li$ and 
$\hat{x}_i=b_{ij}x_j$ is the change of basis, then the characteristic polynomials with respect to the  two bases satisfy $\hat{Q}(z)=Q(zB)$. This shows that the two eigen-varieties $V^*_\li$ and $V^*_{\hat{\li}}$ differ only by an invertible linear transformation. Therefore the topology of the complement $(V^*_\li)^c$ is invariant with respect to the change of basis or isomorphism of Lie algebras. In view of this fact we shall make the following definition.

\begin{definition}
For a finite dimensional Lie algenra $\li$, the numbers $b_j=\dim H^j((V^*_\li)^c, \C),$ $j\geq 0$ are called the Betti numbers for $\li$. And the Poincar\'{e} polynomial for $\li$ is defined as
$P_\li(t)=\sum_{j}b_jt^j.$
\end{definition}
A clarification is needed. If $\dim \li=n$ then the open set $(V^*_\li)^c$, being the complement of the zero variety of $Q_\li$, is a {\em domain of holomorphy}. Hence its de Rham cohomology are generated by holomorphic forms on $(V^*_\li)^c$ (\cite{Kr, Ra}). Therefore, although $(V^*_\li)^c$ is a domain in ${\mathbb R}^{2n+2}$, the Betti numbers $b_j$ are 0 for $j>n+1$. This shows that the degree of $P_\li(t)$ is less than or equal to $1+\dim \li$. 

In the case when $\li$ is solvable, Hu-Zhang theorem indicates that
its {\cp} $Q_{\li}(z)$ is a product of linear factors. Hence $V_\li$ is a union of $n$ hyperplanes, counting multiplicity. Such union is called a hyperplane arrangement. An important theorem is needed to proceed. Given vectors $v_1,v_2,\cdots,v_k$ in $\C^{n+1}$, we consider the union of hyperplanes $M:=\cup_{j=1}^kH_j$, where $H_j=\{z\in \C^{n+1}: \langle z, v_j\rangle=0\},\ \ 1\leq j\leq k.$
It is of great interest to study the topology of the complement $M^c=\C^{n+1}\setminus M$ which obviously depends on the relative position of the hyperplanes $H_j$.
Observe that the $1$-forms $\omega_j=\frac{d\langle z, v_j\rangle}{\langle z, v_j\rangle}, j=1,\cdots, k$ are well-defined and non-exact on the complement $M^c$, and hence it is a nontrivial element in the first de Rham cohomology $H^1(M^c, \C)$. Remarkably, a theorem conjectured by Arnold \cite{Ar} and proved by Brieskorn \cite{Br} states that the full de Rham cohomology $H^*(M^c, \C)$ (as a graded algebra with wedge product) is in fact generated by the $1$-forms $\omega_j, 1\leq j\leq k.$ For more information on hyperplane arrangement we refer readers to \cite{OT}. In this section we shall take a look at the Poincar\'{e} polynomial for the arrangement $V^*_\li$, where $\li$ is solvable. We start with an example.

\begin{exmp}\label{ex:poin}
Consider the $L_{4,6}$ type Lie algebra $\li=A_{a,b}$ in Example \ref{ex:ab}, where the {\cp} is \[Q(z)=z_{0}(z_{0}-az_{4})(z_{0}-(b+i)z_{4})(z_{0}-(b-i)z_{4}).\]
Then $V^*_\li$ is the union of the hyperplanes
\[H_1=\{z_0-az_4=0\},\ H_2=\{z_0-(b+i)z_4=0\},\ H_3=\{z_0-(b-i)z_4=0\}.\]
And by Arnold-Brieskorn theorem the de Rham cohomology of $(V^*_\li)^c$ is generated by the three $1$-forms
\[\omega_1=\frac{dz_{0}-a dz_{4}}{z_{0}- a z_{4}},\ \ \omega_2=\frac{dz_{0}-(b + i) dz_{4}}{z_{0}-(b + i)z_{4}},\ \ \omega_3=\frac{dz_{0}-(b - i) dz_{4}}{z_{0}-(b - i)z_{4}},\]
which means 
\[H^1=span\{\omega_j : j=1, 2, 3\},\ \ H^2=span\{\omega_i\wedge \omega_j : j, k=1, 2, 3\}.\]
It is clear that $H^0=\C$ and $H^j=\{0\}$ for $j\geq 3$. Then the Poincar\'{e} polynomial has two possible forms. 

1) If $a$ is not equal to $b\pm i$, then $b_1=3$ and
direct computation shows that $H^2=\frac{1}{Q^*(z)} span\{z_0, z_4\}dz_0\wedge dz_4$ implying $b_2=2$. Hence the Poincar\'{e} polynomial $P_\li(t)= 1+3t+2t^2$. 

2) In the case $a=b\pm i$, then $\omega_1=\omega_2$ or $\omega_1=\omega_3$, and it follows that
$P_\li(t)= 1+2t+t^2$.
\end{exmp}

We pause here to mention that the cohomology of $(V^*_\li)^c$ is quite different from the cohomology theory on Lie algebras as treated in \cite{Di,GK}. The following fact follows immediate from Corollary \ref{cor:nil}.

\begin{corollary}
A solvable Lie algebra $\li$ is nilpotent if and only the Poincar\'{e} polynomial $P_\li$ is the constant $1$.
\end{corollary}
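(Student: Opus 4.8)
The plan is to deduce both implications from Corollary \ref{cor:nil} together with the Arnold--Brieskorn description of the cohomology of a hyperplane complement. Since $\li$ is assumed solvable throughout, its {\cp} $Q_\li(z)$ factors into linear forms, so $V^*_\li$ is a hyperplane arrangement and $P_\li$ is computed from $H^*((V^*_\li)^c,\C)$ as in the examples above.

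For the forward implication I would suppose $\li$ is nilpotent. By Corollary \ref{cor:nil} we then have $Q_\li(z)=z_0^n$, so in the normalization $Q_\li(z)=z_0^kp(z)$ the factor $p$ is a nonzero constant and $V^*_\li=\{p(z)=0\}=\emptyset$. Hence the complement $(V^*_\li)^c$ equals all of $\C^{n+1}$, which is contractible, and its de Rham cohomology is that of a point: $b_0=1$ and $b_j=0$ for $j\geq 1$. Therefore $P_\li(t)=1$.

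For the converse I would argue by contraposition, assuming $\li$ is solvable but not nilpotent and showing $P_\li(t)\neq 1$. By Corollary \ref{cor:nil} we have $Q_\li(z)\neq z_0^n$, equivalently the spectral matrix $\lb$ is nonzero by Proposition \ref{prop:rank}, so at least one linear factor $z_0+\sum_i\lambda_{ij}z_i$ is not proportional to $z_0$. Thus $p(z)$ has degree $\geq 1$ and $V^*_\li$ contains an honest hyperplane $H=\{z_0+\sum_i\lambda_{ij}z_i=0\}$. The associated $1$-form $\omega=\frac{d\langle z,v\rangle}{\langle z,v\rangle}$ is, as noted just before the Arnold--Brieskorn theorem, a nontrivial (non-exact) class in $H^1((V^*_\li)^c,\C)$. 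Consequently $b_1\geq 1$, the coefficient of $t$ in $P_\li(t)$ is positive, and $P_\li(t)\neq 1$, which proves the contrapositive and completes the equivalence.

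Both ingredients are already in hand: Corollary \ref{cor:nil} translates nilpotency into the identity $Q_\li=z_0^n$, and the Arnold--Brieskorn discussion supplies the non-vanishing of $H^1$ for a nonempty arrangement. The only point requiring care is the converse, where one must confirm that non-nilpotency forces a \emph{genuine} hyperplane (not merely another copy of $\{z_0=0\}$) into $V^*_\li$; this is precisely where the identification $\rank\lb=\dim\li/nil(\li)>0$ from Proposition \ref{prop:rank} is used. I expect this to be the main, though still mild, obstacle, since everything else reduces to the contractibility of $\C^{n+1}$ and the standard fact that the defining form of a hyperplane represents a nonzero first cohomology class.
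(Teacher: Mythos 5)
Your proof is correct and follows exactly the route the paper intends: the paper states this corollary as an immediate consequence of Corollary \ref{cor:nil} (nilpotent $\iff Q_\li=z_0^n$), and your argument simply fills in the details — the arrangement $V^*_\li$ is empty precisely when $Q_\li=z_0^n$, giving a contractible complement in one direction, and a genuine hyperplane with a non-exact logarithmic $1$-form (hence $b_1\geq 1$) in the other. No gaps; your care about ruling out that the extra factor is another copy of $\{z_0=0\}$ is the right point to check, and it is settled since a linear factor $z_0+\sum_i\lambda_{ij}z_i$ with some $\lambda_{ij}\neq 0$ must appear.
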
 
On the other hand, nilpotent Lie algebras can have nontrivial cohomology (\cite{GK}). Furthermore, it is not difficult to see that the first Betti number coincides with the number of hyperplanes in the arrangement.
\begin{proposition}
If $\li$ is a solvable Lie algebra then $b_1(\li)=k(\li)-1$.
\end{proposition}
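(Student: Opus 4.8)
The plan is to count the distinct hyperplanes in the arrangement $V^*_\li$ and then invoke the Arnold--Brieskorn theorem to read off $b_1$ directly. First I would start from the Hu--Zhang factorization
\[
Q_\li(z)=\prod_{j=1}^n\Big(z_0+\sum_{i=1}^n\lambda_{ij}z_i\Big),
\]
in which every factor is normalized to have coefficient $1$ on $z_0$. Two such linear forms define the same hyperplane in $\C^{n+1}$ precisely when they are proportional, and since both carry $z_0$-coefficient $1$ the proportionality constant must be $1$; hence distinct factors correspond bijectively to distinct hyperplanes. By definition $Q_\li$ has exactly $k(\li)$ distinct factors, and by Lemma \ref{lem:z0} one of them is $z_0$ itself. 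Removing the full power of $z_0$ to form $p(z)=Q_\li(z)/z_0^k$, the variety $V^*_\li=\{p(z)=0\}$ is therefore the union of exactly $k(\li)-1$ distinct hyperplanes, none of which is $\{z_0=0\}$.

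Next I would apply the Arnold--Brieskorn theorem to this arrangement. Writing the $k(\li)-1$ distinct non-$z_0$ factors as $\ell_1,\ldots,\ell_{k(\li)-1}$ and setting $\omega_r=d\ell_r/\ell_r$, the theorem guarantees that $H^1((V^*_\li)^c,\C)$ is spanned by the logarithmic one-forms $\omega_1,\ldots,\omega_{k(\li)-1}$. It then remains only to show that these one-forms are linearly independent in cohomology; granting this, $b_1(\li)=\dim H^1((V^*_\li)^c,\C)=k(\li)-1$, as claimed.

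The crux is therefore the linear independence, which I would establish by a residue computation. Suppose $\sum_{r}c_r\omega_r=df$ were exact on $(V^*_\li)^c$. Fix an index $s$; since the hyperplanes are distinct, I can choose a generic point of $\{\ell_s=0\}$ lying on no other hyperplane and take a small loop encircling $\{\ell_s=0\}$ once transversally there. Integrating $\omega_r$ over this loop returns $2\pi i$ when $r=s$ and $0$ otherwise, whereas the integral of an exact form is $0$; hence $c_s=0$ for every $s$, so the classes $[\omega_r]$ form a basis of $H^1$. I expect this residue step to be the only genuinely delicate point, since the identification of $V^*_\li$ with $k(\li)-1$ distinct hyperplanes is bookkeeping and the spanning statement is exactly the content of Arnold--Brieskorn. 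Finally I would note that the formula survives the degenerate nilpotent case: there $Q_\li=z_0^n$, so $k(\li)=1$, $p\equiv 1$, the complement is all of $\C^{n+1}$, and $b_1=0=k(\li)-1$.
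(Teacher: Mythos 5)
Your proof follows essentially the same route as the paper: identify the $k(\li)-1$ distinct non-$z_0$ linear factors with distinct hyperplanes, invoke Arnold--Brieskorn for the spanning of $H^1$ by the logarithmic forms, and then verify linear independence. The one point where you diverge is the independence step: the paper argues that $\sum_j\alpha_j\omega_j=0$ forces $\prod_j \ell_j^{\alpha_j}$ to be constant, hence all $\alpha_j=0$ (independence of the forms themselves), whereas your residue computation over a small loop around a generic point of a single hyperplane shows that no nontrivial combination is even \emph{exact}. Your version is marginally stronger and in fact addresses exactly what $b_1=\dim H^1$ requires --- independence modulo exact forms --- where the paper leans on the Brieskorn isomorphism to pass from independence of forms to independence of cohomology classes; both arguments are correct and standard.
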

\begin{proof}
Suppose ${v}_{1},{v}_2, \hdots , {v}_{m}$ are distinct nonzero vectors in $\mathbb{C}^{n}$, where $m \leq n.$ As in Section 2.1 we write $z=(z_0, z')$ and claim that the $1$-forms
\[\omega_j=\frac{dz_0+d\langle z', v_j\rangle}{z_0+\langle z', v_j\rangle}=d\log (z_0+\langle z', v_j\rangle), \ \ 1\leq j\leq m\] are linearly independent. Indeed, if there are constants $\alpha_1,\alpha_2,\cdots,\alpha_m$ such that 
\[0=\sum_j\alpha_j\omega_j=d\log \prod_j (z_0+\langle z', v_j\rangle)^{\alpha_j},\ \forall z\in (V^*_\li)^c,\]
then $\prod_j (z_0+\langle z', v_j\rangle)^{\alpha_j}$ must be a constant. Since the linear functions $z_0+\langle z', v_j\rangle, 1\leq j\leq m$ are distinct, it follows that each $\alpha_j=0$ for each $j$.

If $\li$ is solvable then its {\cp} has $k(\li)-1$ distinct factors other than the factor $z_0$, and by the foregoing arguments the $1$-forms associated with these factors are linearly independent. This implies that 
$b_1(\li)=k(\li)-1$.
\end{proof}
The following question seems natural.
\begin{question}
Can the Betti numbers $b_j(\li),\ j\geq 2$ be determined by known invariants of $\li$?
\end{question}

Now we try to determine the degree of the Poincar\'{e} polynomial. 
\begin{proposition}\label{prop:deg}
For any finite dimensional solvable Lie algebra $\li$ we have \[\deg P_\li\leq \dim \li/nil(\li)+1.\]
\end{proposition}
\begin{proof}
Let $\li$ be a solvable Lie algebra with basis $\Big \{x_{1},x_{2},\hdots,x_{n} \Big\}$ and the associated spectral matrix $\lb$. Then by Proposition \ref{prop:rank} we have $\rank\lambda=\dim \li/nil(\li)$. We assume $\rank\lambda =m$ and claim that there exists a basis $\Big \{\hat x_{1},\hat x_{2},\hdots,\hat x_{n} \Big\}$ for $\li$ such that the characteristic polynomial  $\hat{Q}_\li$ with respect to this basis has precisely $m+1$ variables. In fact, 
let $L$ be an invertible matrix of row operations on $\lb$ such that $L\lb$ has precisely $m$ linearly independent rows. 
Set $B=L^{-1}=(b_{ij})_{n \times n}$, and define $\hat x_{i}=\sum_{j=1}^nb_{ij}x_{j}.$ Then by Lemma \ref{lem:lb} and its proof we have  $\lb=B\hat \lambda$, and hence $\hat{\lb}=L\lb$. Therefore,
\begin{equation}\label{eq:qhat}
\hat{Q}(z)=\det(z_{0}I +\sum_{j=1}^n z_{j}\ad\hat x_{j})=\prod_{j=1}^n \left(z_{0}+\sum_{i=1}^n \hat \lambda_{ij} z_{i}\right).
\end{equation}
Since the entries in the $i$-th row of $\hat{\lb}$ are the coefficients of $z_i$ in (\ref{eq:qhat}), the number of variables $z_{i}$'s in (\ref{eq:qhat}) coincides with the number $m$ of nonzero rows in $\hat{\lb}$. Counting the variable $z_0$, the {\cp} $\hat{Q}$ therefore has precisely $m+1$ variables. Since the eigen-variety $\hat{V}_\li:=\{\hat{Q}(z)=0\}$ is situated in $\mathbb{C}^{m+1}$ and its complement $\hat{V}^c_\li$ is a domain of holomorphy, one naturally has $b_j=\dim H^j(\hat{V}^c_\li, \C)=0$ for all $j> m+1$. This completes the proof.
\end{proof}

In particular, if $\li$ is a solvable $1$-dimensional extension of a nilpotent Lie algebra then $P_\li$ is linear or quadratic. Example \ref{ex:poin} shows that the inequality in Proposition \ref{prop:deg} is sharp.

\section{Concluding remarks}

The idea of {\cp} for Lie algebras seems fairly intriguing. This paper aims to identify some invariants for Lie algebras based on their {\cp}s. The results here appear to offer a point of view from which algebraic geometry, topology and spectral theory provide new and natural tools for the study of Lie algebras. Application to the classification scheme of solvable Lie algebras, as attempted in Chapter 5 and 6, is rather preliminary at this point. Nevertheless, we hope this paper may help to lay a foundation for more in-depth explorations on the characteristic polynomial of Lie algebras.

\end{document}